\newtheorem{thm}{Theorem}[section]
\newtheorem{prop}[thm]{Proposition}
\newtheorem{lem}[thm]{Lemma}
\newtheorem{defn}[thm]{Definition}
\renewcommand\>{\rangle}
\begin{document}

\title{Metric properties and distortion in wreath products}

\author{Jos\'e Burillo}

\address{Departament de Matem\`atica Aplicada IV, EETAC-UPC, C/Esteve Torrades 5,
08860 Castelldefels, Barcelona, Spain} \email{burillo@ma4.upc.edu}

\author{Eric L\'opez Plat\'on}
\address{Departament de Matem\`atica Aplicada IV, UPC, C/Jordi Girona 1-3,
08034 Barcelona, Spain} \email{eric.lopez@ma4.upc.edu}
\thanks{The authors are grateful for the support from MEC grant MTM2011--25955, and also from the UPC and the research group COMBGRAF}

\begin{abstract}
For a finitely generated regular wreath product, the metric is known, but its computation can be an NP-complete problem. Also, it is not known for the nonregular case.

In this article, a metric estimate is defined for regular wreath products which can be computed in polynomial time, based on the metrics of the factors. This estimate is then used to study the distortion of some natural subgroups of a wreath product. Finally, the metric estimate is generalized to the nonregular case.
\end{abstract}

\maketitle

\section*{Introduction}

Since the original works by Gromov and other authors in the 1980s, the study of infinite groups from the geometric point of view has experienced a great deal of development. The main tool to study groups as geometric objects is the Cayley graph of the group with a given set of generators, which can be given structure of metric space and whose metric properties are not yet completely well understood, and which give insights on the algebraic properties of the groups.

One of the concepts developed to study groups from the metric point of view is the concept of distortion of a subgroup in a group. This concept, analogous to the geometric concept of distortion of a submanifold, measures the difference between the two metric structures of a finitely generated subgroup inside a group. Namely, its own metric given by its generating set, which compares to the metric induced by the metric of the larger group. This gives rise to the concept of distortion function, which measures the difference between the two metrics. A subgroup is nondistorted if the two metrics are comparable (the distortion function is linear), a concept analogous to that of totally geodesic submanifolds of riemannian manifolds.

The concept of distortion appears already in Gromov's paper \cite{gromovasym}, and has been studied by several authors, such as Bridson \cite{bridsonfrac}, where it is shown that $n^r$ is the distortion function for a pair $G\subset H$ for any rational number $r>1$, or Sapir and Ol'shanskii (see \cite{olshdist} and \cite{sapir-olshdist}), where they give a description of which functions can be obtained as distortion functions of cyclic subgroups in finitely presented groups.

A metric estimate for a group is a function which is equivalent to the metric, up to multiplicative and additive constants. Such a function can be used to compute distortions and simplify other calculations.

A wreath product is the semidirect product of a group, and the group of maps from itself, or a set on which it acts, to another group. A wreath product is finitely generated if the two groups are finitely generated, and the action has a finite number of orbits. Computation of the exact value of the distance between two points in a wreath product involves solving a traveling salesman problem in the Cayley graph of the group. This is a well known and widely studied hard problem, as can be seen in \cite{TSPBook}, and hence the metric can be difficult to compute. The main result in this paper is that the distance given by the traveling salesman problem can be replaced by the weight of a minimum spanning tree, producing an equivalent value for the metric, but which can be computed in polynomial time, as can be seen in \cite{eisner-1997-mst}. This metric is then used to study the distortion of some natural subgroups of a wreath product. Studies of distortion for some other subgroups of particular wreath products 
have been done in \cite{olshwreath}. Finally, the metric estimate is generalized to the nonregular case.

\section{Background}

\subsection{Metric estimates}
Recall the definition of distortion function of a subgroup, due to Gromov in \cite{gromovasym}:
\begin{defn} Let $G$ be a finitely generated group, and $H<G$ a subgroup, also finitely generated. Define the \emph{distortion of $H$ in $G$} as:
$$
\Delta_H^G(n)=\max\left\lbrace \|x\|_H : x\in H,\ \|x\|_G\leq n \right\rbrace
$$
\end{defn}
As usual, the distortion function depends on the generating set. Two distortion functions for the same subgroup in a group, but with different generating sets (in either the group or the subgroup) differ only by multiplicative constants, and hence only the order of the distortion funcion is really defined. So we talk about quadratic, polynomial or exponential distortion.

The fact that the distortion is defined only up to multiplicative and additive constants, implies that it is not necessary to know the exact values of $\|x\|$, it is enough to compute them up to constants. This fact gives rise to the following definition:

\begin{defn}
 Let $G$ be a group and let $f,g:G\longrightarrow \mathbb{R}$ be two functions. We say that $f$ and $g$ are \emph{equivalent} (written $f\sim g$) if there exist two constants $C$ and $D$ such that
 $$
 \frac{g}{C}-D\leq f \leq Cg+D
 $$
\end{defn}

Observe that $\sim$ is an equivalence relation.

\begin{defn}
 Let $G$ be a finitely generated group. A map $f:G\rightarrow \mathbb{R}$ is called an \emph{estimate of the metric of $G$} or \emph{quasi-metric} if it is equivalent to $||\cdot||_G$, the word metric on $G$.
\end{defn}

With this definition, if we have quasi-metrics for both $G$ and $H$, we can redefine the distortion function:
$$
\Delta_H^G(n)=\max\left\lbrace E_H(x) : x\in H,\ E_G(x)\leq n \right\rbrace.
$$

\subsection{Wreath products}

The wreath product of two groups is a classical construction that appears in different problems, and has some interesting metric properties, depending on the metric properties of the two factors.

\begin{defn}
 Given two groups, $A$, $B$, and $\Omega$ a set on which $B$ acts, their \emph{restricted wreath product} $A\wr_\Omega B$ is defined to be the following semidirect product:
 \begin{equation*}
  \bigoplus_{x\in \Omega}A_x\rtimes B
 \end{equation*}

Where $B$ acts on the set of all maps from $\Omega$ to $A$ with finite support by changing the index.

\end{defn}

The unrestricted wreath product allows infinite support, but for the purpose of this paper we will only use the restricted version. Also, $B$ acts naturally on itself by left multiplication, and we will note simply by $A\wr B$ the special case where $\Omega=B$, which is called \emph{regular}.

An alternative way of understanding wreath products arises from a classical example. The lamplighter group $\mathbb{Z}_2\wr\mathbb{Z}$ can be understood as a lamplighter on an infinite street (the Cayley graph of $\mathbb{Z}$), with a light on every integer, which is either lit or dark. A word in the group is a set of instructions for the lamplighter, which can be either move to the next or previous light (a generator of $\mathbb{Z}$) or turn on or off the light you are currently at (the generator of $\mathbb{Z}_2$).

This would be the identity element:

 \begin{center}
 \includegraphics[scale=1,keepaspectratio=true]{./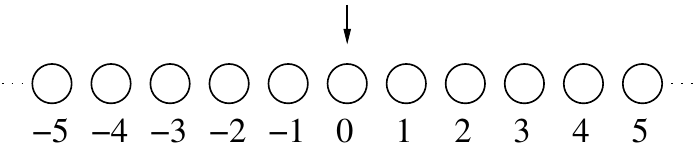}
\end{center}

Where the blank circles represent the $0$ in $\mathbb{Z}_2$ and the arrow represents the lamplighter's final position.
After a word like $t^2atat^{-4}at^{-2}at$ the element would look like this:

 \begin{center}
 \includegraphics[scale=1,keepaspectratio=true]{./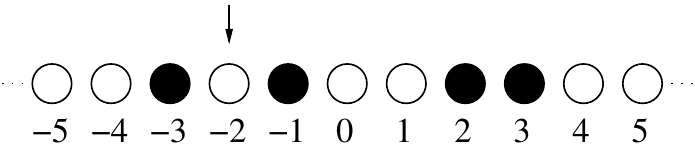}
\end{center}

Where the filled circles represent the element $1$ of $\mathbb{Z}_2$.

This viewpoint can be generalized, and in general a restricted product $A\wr B$ be understood as a lamplighter (or a pointer) traveling through the Cayley graph of $B$ and adding elements of $A$ when prompted. Notice that this would only generate elements with finite support, so it can only be applied to the restricted wreath product.

Such point of view can still be used when the product is not regular, considering the lamplighter traveling through $\Omega$. If the action is not transitive, however, the wreath product will need a different lamplighter for each orbit.

\begin{prop}[Presentation]

If $A=\langle S_A | R_A \rangle$ and $B=\langle S_B | R_B \rangle$, and $B$ acts transitively on $\Omega$, choose $x\in\Omega$ a starting point, then the wreath product $A\wr_\Omega B$ admits the following presentation:
\begin{equation}
 A\wr_\Omega B = \langle S_A, S_B | R_A, R_B, [a_1^{b_1}, a_2^{b_2}] (\text{if } xb_1\neq xb_2)  \rangle
\end{equation}

Where $[g,h]$ is the commutator $ghg^{-1}h^{-1}$, and $g^h$ is the conjugation $h^{-1}gh$.
\end{prop}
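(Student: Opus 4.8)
The plan is a von Dyck argument: write $G$ for the group defined by the presentation on the right-hand side and $W=A\wr_\Omega B$ for the wreath product, produce a surjection $\Phi\colon G\to W$, and then exhibit an explicit inverse.

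\emph{Construction of $\Phi$.} Inside $W$ there is a canonical copy of $B$ and, for the chosen basepoint $x\in\Omega$, a canonical copy $A_x\cong A$ consisting of the maps supported at $x$. Send $S_B$ to $B\subset W$ and $S_A$ to the corresponding generators of $A_x$. The relators $R_B$ and $R_A$ are respected because $B$ and $A_x$ are isomorphic copies of $B$ and $A$ inside $W$. For $a_1,a_2\in S_A$ and $b_1,b_2\in B$, conjugating $A_x$ by $b_i$ moves its support from $x$ to $xb_i$, so $a_i^{b_i}$ lies in the summand $A_{xb_i}$; hence when $xb_1\neq xb_2$ the elements $a_1^{b_1}$ and $a_2^{b_2}$ lie in different direct summands of $\bigoplus_\omega A_\omega$ and commute, which is exactly the relator $[a_1^{b_1},a_2^{b_2}]$. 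So $\Phi$ is a well-defined homomorphism; it is onto since its image contains $B$ and $A_x$, hence $b^{-1}A_xb=A_{xb}$ for every $b$, hence every $A_\omega$ by transitivity, and these together with $B$ generate $W$.

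\emph{Construction of the inverse.} Here I would use the universal property of the semidirect product $W=\bigl(\bigoplus_{\omega\in\Omega}A_\omega\bigr)\rtimes B$: a homomorphism $\Psi\colon W\to G$ is the same as a pair of homomorphisms $\beta\colon B\to G$ and $\alpha\colon\bigoplus_\omega A_\omega\to G$ satisfying $\beta(b)\,\alpha(f)\,\beta(b)^{-1}=\alpha(b\cdot f)$. Take $\beta$ to be the tautological map $S_B\mapsto S_B\subset G$, well defined because $R_B$ is among the relators of $G$. Using transitivity, fix for each $\omega\in\Omega$ an element $b_\omega\in B$ with $xb_\omega=\omega$, choosing $b_x=1$, and let $\alpha_\omega\colon A\to G$ be $a\mapsto\beta(b_\omega)^{-1}\,a\,\beta(b_\omega)$, reading $a\in A$ as a word in $S_A$; this is well defined because $R_A$ is among the relators of $G$. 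For $\omega\neq\omega'$ the images of $\alpha_\omega$ and $\alpha_{\omega'}$ commute elementwise: expanding the arguments as words in $S_A$ reduces the claim to the relators $[a_1^{b_\omega},a_2^{b_{\omega'}}]=1$, which are present precisely because $xb_\omega\neq xb_{\omega'}$. Since $\bigoplus_\omega A_\omega$ is the colimit of its finite sub-products, and a homomorphism out of a finite direct product is exactly a family of homomorphisms with pairwise commuting images, the $\alpha_\omega$ assemble into a single homomorphism $\alpha$.

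\emph{Compatibility and conclusion.} It remains to verify $\beta(b)\alpha(f)\beta(b)^{-1}=\alpha(b\cdot f)$; by the universal property it suffices to check this on the generating maps $f=\delta_{\omega,a}$ with $a\in S_A$, where it takes the form $a^{u_1}=a^{u_2}$ with $u_1,u_2$ words in $S_B$ representing elements of $B$ that differ by an element of $\mathrm{Stab}(x)$. Equivalently, it is the assertion that conjugation by $\mathrm{Stab}(x)$ fixes the image of $S_A$ in $G$. \textbf{This is the main obstacle.} In the regular case $\Omega=B$ one has $\mathrm{Stab}(x)=\{1\}$ and there is nothing to prove; in the general transitive case the point is to see that the construction of $\Psi$ does not depend on the chosen coset representatives $b_\omega$, and this is the step that needs care. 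Granting it, $\Psi$ is a homomorphism, and $\Psi\circ\Phi$ fixes $S_B$ and, because $b_x=1$, fixes $S_A$, so $\Psi\circ\Phi=\mathrm{id}_G$; hence $\Phi$ is injective, and being also surjective it is the desired isomorphism.
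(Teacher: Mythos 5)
The paper states this proposition without proof, so there is nothing to compare against; on its own terms, your von Dyck strategy (a surjection $\Phi$ from the presented group $G$ onto $W=A\wr_\Omega B$, plus an inverse $\Psi$ assembled from the universal properties of the direct sum and the semidirect product) is the standard and correct framework, and your verifications of $\Phi$, of surjectivity, and of the pairwise commutation needed to glue the $\alpha_\omega$ are all fine. The problem is precisely the step you flag and then ``grant'': that conjugation by $\mathrm{Stab}(x)$ fixes the image of $S_A$ in $G$. This is not a technicality that can be granted --- it is false in general, because every relator of the given presentation involves a pair $b_1,b_2$ with $xb_1\neq xb_2$, so nothing in $G$ forces $a^h=a$ for $h\in\mathrm{Stab}(x)$. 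Concretely, take $\Omega$ a single point, a transitive action with $\mathrm{Stab}(x)=B$: the presentation has no commutator relators at all and degenerates to $\langle S_A,S_B\mid R_A,R_B\rangle=A*B$, whereas $A\wr_\Omega B=A\times B$; these differ whenever $A$ and $B$ are both nontrivial. So $\Psi$ does not exist, $\Phi$ is surjective but not injective, and the proposition as stated is correct only when the point stabilizer is trivial --- in particular in the regular case $\Omega=B$, which is the only case the paper actually uses in its metric computations. (The same defect is inherited by the multi-orbit version stated immediately afterwards.)

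To repair both the statement and your proof, add the relators $a^{b_1}=a^{b_2}$ for $a\in S_A$ and $xb_1=xb_2$ (equivalently, $[a,h]=1$ for $a\in S_A$ and $h\in\mathrm{Stab}(x)$). With these present, the compatibility condition you isolated reduces exactly to the new relators: $b_\omega b$ and $b_{\omega b}$ differ by an element of $\mathrm{Stab}(x)$, so $a^{b_\omega b}=a^{b_{\omega b}}$ holds in $G$, $\Psi$ is well defined and independent of the choice of coset representatives, and the rest of your argument ($\Psi\circ\Phi=\mathrm{id}_G$ on generators, hence injectivity of $\Phi$) closes the proof. Note that the added relators are indexed by $\mathrm{Stab}(x)$ (or a generating set of it), so the presentation is no longer finitely describable in terms of $S_A$ and $S_B$ alone; this is consistent with the paper's own remark that finite presentability is more delicate in the non-regular case.
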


The commutators can be reduced to just use the generators of $A$, but not for $B$. Because of this, it is known that $A\wr B$ is finitely presented if and only if either $B$ is finite or $A$ is trivial. In the non-regular case the conditions are a bit more complex.

This presentation has an starting point in $\Omega$, $x$, which is understood to be $1_B$ for the regular case. If the action had more than one orbit, then we would need a starting point in each orbit, and a different set of generators $S_A$ assigned to each starting point. All starting points move at the same time (because $B$ acts on the whole set), but the generators of $A$ have a mark stating on which orbit they are writing.

\begin{prop}[Presentation]

If $A=\langle S_A | R_A \rangle$ and $B=\langle S_B | R_B \rangle$, and $B$ acts on $\Omega$, choose $x_i\in\Omega$ a set of starting points, one for each orbit, then the wreath product $A\wr_\Omega B$ admits the following presentation:
\begin{equation*}
 A\wr_\Omega B = \langle {S_A}_{x_i}\text{ for } i\in \Omega/B, S_B | {R_A}_{x_i}\text{ for } i\in \Omega/B, R_B, [a_1^{b_1}, a_2^{b_2}] (\text{if } xb_1\neq xb_2)  \rangle
\end{equation*}

\end{prop}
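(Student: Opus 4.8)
The plan is to run the standard ``presentation of a semidirect product'' argument, using the already-established transitive presentation to control the copy of $A$ sitting over each point of $\Omega$. Write $W$ for the group defined by the displayed presentation and $\Gamma=A\wr_\Omega B$ for the wreath product. First I would build a homomorphism $\phi\colon W\to\Gamma$ sending a generator $a\in {S_A}_{x_i}$ to the corresponding element of the factor $A_{x_i}$ over the basepoint of the $i$-th orbit, and each $s\in S_B$ to itself. One checks the defining relations die in $\Gamma$: the ${R_A}_{x_i}$ and $R_B$ because they already hold in the factors and in $B$, and a commutator $[a_1^{b_1},a_2^{b_2}]$ because the hypothesis $x_ib_1\neq x_jb_2$ ($x_i,x_j$ being the basepoints carried by $a_1,a_2$, which is what the shorthand in the statement means) says the two conjugates are supported at distinct points of the index set, while $\bigoplus_{x\in\Omega}A_x$ is a restricted direct sum. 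Since the $B$-translates of the ${S_A}_{x_i}$ exhaust the base group and $S_B$ exhausts $B$, the map $\phi$ is onto.

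Second, and this is the substantive half, I would prove $\phi$ injective by a normal-form analysis of $W$. Killing all the ${S_A}_{x_i}$ yields a retraction $W\to B$; the subgroup generated by $S_B$ maps isomorphically onto $B$ under it, and $W=N\rtimes B$ with $N$ the normal closure of $\bigcup_i {S_A}_{x_i}$. Using $W=NB$, a short computation shows that the subgroup generated by all conjugates $a^b$ with $a\in {S_A}_{x_i}$ and $b\in B$ is normalized by every generator of $W$ (conjugating $a^b$ by $s\in S_B$ gives $a^{bs}$, and conjugating by another $A$-generator stays inside), hence equals $N$; thus $N$ is generated, as a subgroup, by this set of conjugates. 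Now sort these conjugates by the point of $\Omega$ they write on: for $y\in\Omega$ lying in the $i$-th orbit, set $A_y^W=\langle\, a^b : a\in {S_A}_{x_i},\ x_ib=y\,\rangle\le N$.

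Third, I would verify three facts and assemble them. (i) The assignment $a\mapsto a^b$ defines a surjection $A\to A_y^W$ (the relations ${R_A}_{x_i}$ survive conjugation); composing with $\phi$, which carries $A_y^W$ onto the factor $A_y$ via the canonical identification, shows this surjection is an isomorphism, so $A_y^W\cong A$ and $\phi$ is injective on each $A_y^W$. (ii) For $y\neq y'$ the subgroups $A_y^W$ and $A_{y'}^W$ commute elementwise --- this is exactly what the listed commutators provide, and when $y,y'$ lie in different orbits the needed relations are automatically present, since then $x_ib\neq x_jb'$ for all $b,b'$. (iii) Hence the multiplication map $\bigoplus_{y\in\Omega}A_y^W\to N$ is well defined and onto, and it is injective because following it with $\phi|_N$ yields the identity map of $\bigoplus_{y\in\Omega}A_y$; so $\phi|_N$ is an isomorphism onto the base group. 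Finally, conjugation in $W$ by $c\in B$ carries $a^b\in A_y^W$ to $a^{bc}\in A_{yc}^W$, which matches the permutation action of $B$ on $\Omega$, so $\phi|_N$ and the copy of $B$ fit together into an isomorphism $W=N\rtimes B\longrightarrow \bigoplus_{x\in\Omega}A_x\rtimes B=A\wr_\Omega B$.

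The main obstacle is step (iii): showing the commutators in the list are \emph{exactly} enough to force $N$ to split as the restricted direct sum $\bigoplus_{y}A_y^W$, rather than collapsing or expanding it --- everything else is bookkeeping about which point of $\Omega$ a given conjugate $a^b$ acts on. It is also worth noting that the argument subsumes the transitive Presentation proposition (take $\Omega$ a single orbit); alternatively one could phrase the proof as gluing the transitive presentations along their common $S_B,R_B$ and checking that adjoining the cross-orbit commutators realizes the restricted direct sum of the base groups, but the direct route above seems cleanest.
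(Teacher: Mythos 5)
The paper states this proposition without proof, so there is nothing to compare against; judging your argument on its own terms, the overall strategy (a surjection $\phi\colon W\to A\wr_\Omega B$ from the presented group $W$, the splitting $W=N\rtimes B$, and the decomposition of $N$ into point-indexed pieces) is the standard and correct one, but your step (i) contains a genuine gap, and the gap is fatal as long as the relation set is read literally. Your subgroup $A_y^W$ is generated by \emph{all} conjugates $a^b$ with $x_ib=y$, that is, by one copy of ${S_A}_{x_i}$ for every $b$ in the coset $\{b: x_ib=y\}=\mathrm{Stab}(x_i)b_0$. The map $a\mapsto a^{b_0}$ for a fixed choice of $b_0$ is therefore not a surjection onto $A_y^W$ unless the relations force $a^{b}=a^{b'}$ whenever $x_ib=x_ib'$, equivalently $[a,h]=1$ for all $h\in\mathrm{Stab}(x_i)$. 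No listed relation does this: the commutators $[a_1^{b_1},a_2^{b_2}]$ are imposed only when $xb_1\neq xb_2$, which is exactly the complementary case. Consequently nothing in the presentation identifies the various copies of $A$ sitting over the same point $y$, $\phi$ collapses all of them onto the single factor $A_y$, and the injectivity argument in your (iii) --- ``composing with $\phi|_N$ yields the identity'' --- breaks down because $\phi$ restricted to $A_y^W$ need not be injective.

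This is not a defect of your write-up so much as of the statement itself: for $A=B=\mathbb{Z}_2$ acting trivially on a one-point set $\Omega$, the wreath product is $\mathbb{Z}_2\times\mathbb{Z}_2$, while the displayed presentation has an empty commutator list and defines $\mathbb{Z}_2\ast\mathbb{Z}_2$. The fix is to add the relations $[a,h]=1$ for $a\in {S_A}_{x_i}$ and $h$ ranging over (a generating set of) $\mathrm{Stab}(x_i)$; with those in place your steps (i)--(iii) go through essentially as written. Whenever the action is free --- in particular in the regular case, which is the only one the paper's metric computations actually rely on --- the stabilizers are trivial, the extra relations are vacuous, and your proof is correct; but in general you cannot assert (i) without them. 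Two minor points in your favour: your reading of the ambiguous condition ``$xb_1\neq xb_2$'' as ``$x_ib_1\neq x_jb_2$'' for the respective basepoints is the right one, and your observation in (ii) that the cross-orbit commutators are automatically included is correct.
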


\section{Metric in regular restricted wreath products}

We are interested in studying the word metric in this kind of products. First of all, for the word metric to make sense, we need the group to be finitely generated. Just looking at the presentation it is clear that we need both $A$ and $B$ to be finitely generated. We also need the product to be restricted, and the action of $B$ on $\Omega$ to have finitely many orbits, but in this section we will restrict ourselves to the regular and restricted case.

In this case, the metric is already known. From an algebraic point of view, take an element $x$ of $A\wr B$. This element is a map from $B$ to $A$ with finite support, that is, a choice of finitely many elements $b_1, \dots, b_s$ of elements of $B$ mapped to $a_1,\dots, a_s$, and possibly a final element $b_f$ of $B$. We have an obvious normal form:
\begin{equation*}
 x=\left(\prod_{i=1}^sb_ia_ib_i^{-1}\right) b_f
\end{equation*}

which is uniquely defined, up to permutation, given the $a_i$, $b_i$ and $b_f$. Consider this normal form from the lamplighter point of view for the wreath product. What it does is go to every $b_i$, apply an $a_i$ there and go back to the origin. There is no reason to visit the same $a_i$ twice, so this makes sense. However, this word is not geodesic. In some cases, it would be easier to find a path starting at the identity, visiting each $b_i$ (in any order) and ending in $b_f$, and being shorter that going back to the origin after each step.

This can be repeated as long as the path we have is not optimal. So the actual metric will appear when we have the shortest path from the identity to $b_f$ visiting all the $b_i$ in any order. Finding such a path is a classical problem in algorithmics, known as the Traveling Salesman Problem. Since the actual Traveling Salesman Problem looks for a cycle (ending at the starting point), and we want it to have particular starting and ending points, we will consider the Traveling Salesman Path Problem. We will denote by $\tau(e,b_i,b_f)$ the length of the path which provides a solution for the Problem for this particular case, i.e., the length of a path in the Cayley graph of $B$ which starts at $e$, visits each one of the vertices $b_i$ and ends at $b_f$.

With this in mind, and considering that when we visit one of the $b_i$ we can build $a_i$ in just one visit and in an optimal way, the metric in $A\wr B$ can be calculated.

\begin{prop}
 Let $x\in A\wr B$ be an element of the form $\left(\prod_{i=1}^sb_ia_ib_i^{-1}\right) b_f$. Then

 \begin{equation*}
  \lVert x \rVert_{A\wr B}=\sum_{i=1}^s\lVert a_i\rVert_A + \tau(e, b_i, b_f)
 \end{equation*}

 is the word metric for $x$.

\end{prop}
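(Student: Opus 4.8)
The plan is to prove the two inequalities $\lVert x\rVert_{A\wr B}\le\sum_i\lVert a_i\rVert_A+\tau(e,b_i,b_f)$ and $\lVert x\rVert_{A\wr B}\ge\sum_i\lVert a_i\rVert_A+\tau(e,b_i,b_f)$ separately. Throughout I will assume, as is implicit in the normal form, that the $b_i$ are pairwise distinct and each $a_i\ne 1_A$, so that $\{b_1,\dots,b_s\}$ is exactly the support of $x$.

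For the upper bound I would exhibit an explicit word of the required length. Take an optimal walk $\gamma$ in the Cayley graph of $B$ (with respect to $S_B$) that starts at $e$, passes through each $b_i$, and ends at $b_f$; by definition it has length $\tau(e,b_i,b_f)$. Reading $\gamma$ as a word in $S_B^{\pm1}$ and splicing in, at one of the visits of $\gamma$ to $b_i$, a geodesic word for $a_i$ of length $\lVert a_i\rVert_A$, the lamplighter description of $A\wr B$ shows the resulting word represents $x$. Its length is $\tau(e,b_i,b_f)+\sum_i\lVert a_i\rVert_A$, which gives the upper bound.

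For the lower bound I would start from an arbitrary geodesic word $w=g_1\cdots g_n$ for $x$ and separate its letters into the $S_B^{\pm1}$-letters and the $S_A^{\pm1}$-letters. The former spell the image of $x$ under the retraction $A\wr B\to B$, namely $b_f$, and, read as successive moves, they trace a walk in the Cayley graph of $B$ from $e$ to $b_f$; tracking the pointer, the $S_A^{\pm1}$-letters occurring while the pointer sits at a vertex $b$ have ordered product equal to the lamp value of $x$ at $b$. Since that value is $a_i\ne 1_A$ at $b=b_i$ and trivial elsewhere, the walk must visit every $b_i$, so it is admissible for the Traveling Salesman Path Problem and contributes at least $\tau(e,b_i,b_f)$ letters; and the block of $S_A^{\pm1}$-letters at $b_i$ spells $a_i$, so it contributes at least $\lVert a_i\rVert_A$ letters. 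Adding up over the distinct vertices $b_i$ yields $n\ge\sum_i\lVert a_i\rVert_A+\tau(e,b_i,b_f)$, and combined with the upper bound this gives equality.

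The substantive point is the lower bound, and inside it the passage from a word to a pointer trajectory together with the correct attribution of each $S_A$-letter to the lamp it alters: one must be careful with the semidirect-product conventions (which generator moves the pointer, which writes, and on which side $B$ acts) and, more importantly, with the fact that the trajectory may visit a given $b_i$ several times, so that the subword realizing $a_i$ is the concatenation of several disjoint blocks of $w$. Checking that these blocks are disjoint and that their ordered product is $a_i$ is what makes $\lVert a_i\rVert_A$ a valid lower bound; the commutator relations of the presentation play no role here beyond identifying lamps at distinct vertices, and in particular never shorten the $A$-part or $B$-part of a word, which is exactly why the two bounds match on the nose.
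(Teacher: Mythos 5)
Your proof is correct. Note that the paper itself does not prove this proposition at all --- it only cites \cite{olshwreath} for it --- so your argument is supplying the omitted proof, and it is the standard one: the upper bound by splicing geodesic $A$-words into an optimal traveling-salesman path, and the lower bound by projecting an arbitrary geodesic word to a pointer trajectory in the Cayley graph of $B$ and attributing each $A$-letter to the lamp at the current pointer position. You correctly identify the one point needing care in the lower bound (the letters contributing to $a_i$ may form several disjoint blocks whose ordered product is $a_i$, which still forces at least $\lVert a_i\rVert_A$ of them), and the two bounds match, so the argument is complete.
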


A proof of this can be seen in \cite{olshwreath}. Although it is the exact metric, this number presents a few complications. For instance, it requires solving an $NP$-hard problem, the Traveling Salesman Path Problem, which will be the main problem. It also requires knowing the exact distance in $A$ and $B$, and finally, it is hard to extend to the nonregular case. These problems can be solved using metric estimates, and although the metric will not be exact, but again an estimate, it will still be useful for many situations.

The basic idea is finding something which is equivalent (up to constants) to the solution of the Traveling Salesman Path Problem, and which can be calculated in polynomial time. There are several approximation algorithms which run in polynomial time, but for simplicity reasons, since we are not bounded to get the smallest possible constants, we will use the minimum spanning tree:

\begin{defn}
 Given a weighted graph $\Gamma$, a \emph{minimum spanning tree} $T$ is a connected graph containing all the vertices in $\Gamma$ such that its total weight is minimum on all graphs satisfying this condition.
\end{defn}

\begin{lem}
 Let $X$ be a metric space, and $x_1, \dots, x_n$ be $n$ points in $X$. Build the complete graph $K$ on $n$ vertices, with weights $w(i,j)=d_X(x_i, x_j)$. Let $\mu(x_1,\dots, x_n)$ be the weight of a minimum spanning tree in $K$.

 Then there is a constant $C$ such that
 \begin{equation*}
  \frac{\mu(x_1,\dots,x_n)}{C} \leq \tau(x_1,x_2, \dots, x_{n-1}, x_n) \leq C\,\mu(x_1,\dots,x_n)
 \end{equation*}

\end{lem}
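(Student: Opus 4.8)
The plan is to prove the two inequalities separately, and in fact to show that the constant $C=2$ works, though I will not try to optimize it. I work throughout in the complete graph $K$. Recall that the weights $w(i,j)=d_X(x_i,x_j)$ satisfy the triangle inequality, and that $\tau(x_1,\dots,x_n)$ is the minimum weight of a Hamiltonian path of $K$ whose first vertex is $x_1$ and whose last vertex is $x_n$ (any path in $X$ starting at $x_1$, ending at $x_n$ and visiting every $x_i$ has length at least the sum of the $w$-weights along the corresponding ordering, by the triangle inequality, and conversely such an ordering lifts to a path in $X$ of that length by concatenating geodesics).

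\emph{Lower bound.} A Hamiltonian path of $K$ is in particular a spanning tree of $K$, since a path is a tree meeting every vertex. Hence the weight of any Hamiltonian path of $K$ is at least $\mu(x_1,\dots,x_n)$, and applying this to an optimal one gives $\mu(x_1,\dots,x_n)\le\tau(x_1,\dots,x_n)$. This is the left inequality, with constant $1$.

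\emph{Upper bound.} Fix a minimum spanning tree $T$ of $K$, of weight $\mu:=\mu(x_1,\dots,x_n)$, and let $P\subseteq T$ be the unique path in $T$ from $x_1$ to $x_n$. Let $M$ be the multigraph obtained from $T$ by doubling every edge not lying on $P$ while keeping a single copy of every edge lying on $P$. Then $M$ is connected and spanning, and its total weight is $2\,w(T)-w(P)\le 2\mu$; moreover a parity count of degrees — each vertex $v$ has $\deg_M(v)=2\deg_T(v)-p_v$, where $p_v\in\{0,1,2\}$ is the number of $P$-edges at $v$ — shows that exactly $x_1$ and $x_n$ have odd degree in $M$. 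Consequently $M$ has an Eulerian path from $x_1$ to $x_n$; listing its vertices in the order traversed yields a walk in $K$ from $x_1$ to $x_n$, of weight at most $2\mu$, visiting every vertex. Now shortcut this walk: scanning it from the beginning, keep the first occurrence of each vertex, except that $x_n$ is kept only at its last occurrence. Each such shortcut replaces a sub-walk from some $u$ to some $v$ by the edge $uv$ and hence does not increase the weight, by the triangle inequality. The result is a Hamiltonian path of $K$ from $x_1$ to $x_n$, so $\tau(x_1,\dots,x_n)\le 2\mu=2\,\mu(x_1,\dots,x_n)$, which is the right inequality.

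\emph{Main obstacle.} This is essentially the classical minimum-spanning-tree $2$-approximation for the Traveling Salesman Problem, so the only place that needs genuine care is the prescribed endpoints: the auxiliary Eulerian multigraph must be arranged so that $x_1$ and $x_n$ — and no other vertex — have odd degree (this is why the edges of $P$ receive single rather than double copies), and the shortcutting must be performed so that $x_n$ survives as the final vertex. Everything else is routine.
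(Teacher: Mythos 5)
Your proof is correct and follows essentially the same route as the paper: the lower bound comes from observing that a Hamiltonian path is a spanning tree, and the upper bound from doubling the tree's edges, traversing, and shortcutting via the triangle inequality. Your version is in fact more careful than the paper's (which just asserts that a traversal of $T$ from $x_1$ to $x_n$ costs at most $2\mu$), since you make the Eulerian-path parity argument for the prescribed endpoints explicit; no changes needed.
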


\begin{proof}
 Let $T$ be a minimum spanning tree in $K$, and $P$ a minimum path in $X$, whose total length is $\tau(x_1,x_2, \dots, x_{n-1}, x_n)$. Since $P$ is an optimal path, between any two consecutive vertices it must take a minimum path, so $P$ can be mapped into $K$, let's call this path $P'$.

 In $K$, since $P'$ is a path, it is probably a tree. If it is not, because it is has some cycles, it can be made into a tree by removing edges, while keeping it connected. But the resulting tree is an spanning tree, so $T$ must have less total weight. Hence $\mu(x_1, \dots, x_n)\leq \tau(x_1,x_2, \dots, x_{n-1}, x_n)$.

%
%
For the opposite inequality, travel $T$ along the edges, from $x_1$ to $x_n$ visiting every vertex in between. The resulting path has at most twice the weight of $T$. This path can be mapped into $X$, with each edge being an optimal path between its two endpoints. The image will be a path visiting all $x_i$, beginning at $x_1$ and ending at $x_n$, so it must be longer than $P$. Therefore $\tau(x_1,x_2, \dots, x_{n-1}, x_n)\leq 2\mu(x_1, \dots, x_n)$

\end{proof}

Using this lemma, we can define an estimate for the metric of $A\wr B$. Actually, we can define several estimates, based either on the exact metric or on estimates for either $A$ or $B$.

\begin{thm}
 Let $A$ and $B$ be finitely generated groups, and be $E_A$ and $E_B$ estimates of their respective metrics. Let $\mu(e,b_i,b_f)$ be the weight of the minimum spanning tree as shown in the previous lemma, and let $\hat{\mu}(e,b_i,b_f)$ be the weight of a similarly calculated spanning tree using $E_B$ in place of $\lVert.\rVert_B$. Given $x=(\prod_{i=1}^sb_ia_ib_i^{-1}) b_f$, all the following are metric estimates for $x$.
 \begin{enumerate}
  \item $\sum_{i=1}^s\lVert a_i\rVert_A + \mu(e, b_i, b_f)$
  \item $\sum_{i=1}^sE_A(a_i) + \tau(e, b_i, b_f)$
  \item $\sum_{i=1}^sE_A(a_i) + \mu(e, b_i, b_f)$
  \item $\sum_{i=1}^s\lVert a_i\rVert_A + \hat{\mu}(e, b_i, b_f)$
  \item $\sum_{i=1}^sE_A(a_i) + \hat{\mu}(e, b_i, b_f)$
  \item $\sum_{i=1}^s\lVert a_i\rVert_A + \mu(e, b_i, b_f) + E_B(b_f)$
  \item $\sum_{i=1}^sE_A(a_i) + \hat{\mu}(e, b_i, b_f) + E_B(b_f)$
 \end{enumerate}

\end{thm}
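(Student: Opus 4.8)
The plan is to deduce all seven estimates from the exact formula $\lVert x\rVert_{A\wr B}=\sum_{i=1}^{s}\lVert a_i\rVert_A+\tau(e,b_i,b_f)$ of the Proposition, the previous Lemma, and transitivity of $\sim$. Write $N=\sum_i\lVert a_i\rVert_A$, $\tau=\tau(e,b_i,b_f)$, $\mu=\mu(e,b_i,b_f)$, and let $n$ be the number of distinct points among $e,b_1,\dots,b_s,b_f$, so that $s\le n\le s+2$. One may assume $s\ge 1$ (the case $s=0$, $x=b_f$, being immediate), that in the normal form all $a_i\neq e$ — whence $\lVert a_i\rVert_A\ge 1$ and $s\le N$ — and that $E_A,E_B\ge 0$ (replacing each by its positive part keeps it an estimate with the same constants). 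From the Lemma, $\mu\le\tau\le 2\mu$, so $N+\mu\le N+\tau=\lVert x\rVert_{A\wr B}\le 2(N+\mu)$; thus expression (1) is equivalent to the metric, and it remains to prove that each of (2)--(7) is equivalent to (1).

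The engine of the argument is that the number of lamps is dominated by every version of the $B$-distance term. A path through $n$ distinct vertices of the Cayley graph of $B$, and any spanning tree on them, has $n-1\ge s-1$ edges of weight $\ge 1$, so $s\le\tau+1$ and $s\le\mu+1$; and since a spanning tree on $\{e,b_1,\dots,b_s,b_f\}$ contains a path from $e$ to $b_f$ and all weights are nonnegative, $\lVert b_f\rVert_B\le\mu\le\tau$. Summing $E_A(a_i)\le C\lVert a_i\rVert_A+D$ over the lamps and using $s\le N$ gives $\sum_iE_A(a_i)\le(C+D)N$; summing the reverse inequality and using $s\le\tau+1$ gives $N\le C\sum_iE_A(a_i)+D(\tau+1)$, and likewise with $\mu$ in place of $\tau$. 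Hence $\sum_iE_A(a_i)+\tau\sim N+\tau$ and $\sum_iE_A(a_i)+\mu\sim N+\mu$, i.e.\ (2)$\sim$(1) and (3)$\sim$(1). Adding $E_B(b_f)$ changes nothing, since $E_B(b_f)\le C\lVert b_f\rVert_B+D\le C\mu+D$, giving (6)$\sim$(1).

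For the spanning trees computed with $E_B$: re-weighting the $\lVert\cdot\rVert_B$-minimum spanning tree by $E_B$ yields a (not necessarily minimum) spanning tree for the $E_B$-weights, so $\hat\mu\le C\mu+D(n-1)$; interchanging the roles of the two weight functions gives $\mu\le C\hat\mu+D(n-1)$. Since $n-1\le s+1\le N+1$, these additive errors are absorbed into $N$ exactly as before, so $N+\hat\mu\sim N+\mu$ and (4)$\sim$(1). The remaining two expressions (5)$=\sum_iE_A(a_i)+\hat\mu$ and (7)$=\sum_iE_A(a_i)+\hat\mu+E_B(b_f)$ are the delicate point: both slots have now been passed to estimates, and neither $\sum_iE_A(a_i)$ nor $\hat\mu$ need individually dominate $s$, so the two $O(s)$ errors incurred can no longer be hidden. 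I would handle these by invoking the (mild and standard) normalization that $E_A,E_B$ vanish only at the identity — equivalently (balls in a finitely generated group being finite), bounded below by a positive constant away from the identity. Under it, $\sum_iE_A(a_i)\gtrsim s$ and $\hat\mu\gtrsim n-1\gtrsim s$, the $O(s)$ errors are again absorbed, and one obtains (5)$\sim$(3)$\sim$(1) and (7)$\sim$(5).

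The step I expect to cost the most care is precisely this control of the additive constant across a variable number of summands and tree edges: the relation $\sim$ allows only one additive constant $D$, whereas the term-by-term use of $E_A\sim\lVert\cdot\rVert_A$ and $E_B\sim\lVert\cdot\rVert_B$ accrues one copy of $D$ per lamp and per tree edge. Everything rests on showing these accumulated errors are $O(s)$ and then on the cheap comparisons $s\le N$ and $s\le\tau+1$ (and their analogues for $\mu$ and $\hat\mu$) that trade an $O(s)$ error for a multiplicative constant on a term already present — with the one genuinely new input, needed only for (5) and (7), being the non-degeneracy of the estimates away from the identity.
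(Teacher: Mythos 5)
Your proof follows the same basic strategy as the paper's --- replace each ingredient ($\lVert a_i\rVert_A$ by $E_A(a_i)$, $\tau$ by $\mu$, $\mu$ by $\hat\mu$) one at a time and absorb the accumulated errors using the fact that the number $s$ of lamps is dominated by the weight of the spanning tree --- but you carry it out with considerably more care, and that care is substantive. The paper handles the $O(s)$ additive error only for the substitution $\lVert a_i\rVert_A\mapsto E_A(a_i)$ (``the term $sD$\dots\ is always less than the size of the minimum spanning tree''); for the comparison of $\mu$ with $\hat\mu$ it asserts outright that $\mu\sim\hat\mu$, ignoring the additive constant that accrues on each of the $n-1$ edges. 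That intermediate claim is in fact false: for $E_B=\max(\lVert\cdot\rVert_B-K,0)$ and a configuration of points spaced at distance at most $K$ one gets $\hat\mu=0$ while $\mu$ grows linearly in $s$. Your version --- proving only $N+\mu\sim N+\hat\mu$ by routing the $D(n-1)$ error through $n-1\le s+1\le N+1$ --- is the correct repair. More importantly, the ``delicate point'' you isolate for items (5) and (7) is a genuine gap in the theorem as stated: with $E_A=\max(\lVert\cdot\rVert_A-K,0)$, $E_B=\max(\lVert\cdot\rVert_B-K,0)$ and lamps that are single generators placed at $0,K/2,K,\dots$ in $B=\mathbb{Z}$, expression (5) is identically zero while $\lVert x\rVert_{A\wr B}$ grows linearly, so (5) and (7) really do require your non-degeneracy hypothesis (estimates bounded below by a positive constant off the identity); the paper's proof never addresses this. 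One small caveat on your own write-up: treating ``$E_A,E_B\ge 0$'' as a cost-free normalization is not quite right for the items involving $\hat\mu$, since replacing $E_B$ by its positive part changes the tree weights being compared; it should be stated as an additional (harmless, but necessary) hypothesis --- indeed for a genuinely negative estimate such as $\lVert\cdot\rVert_B-K$ even item (4) fails. With those hypotheses made explicit, your argument is complete and strictly more rigorous than the one in the paper.
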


Observe that $\sum_{i=1}^s\lVert a_i\rVert_A + \tau(e, b_i, b_f)$ is the exact distance from the identity to the element $x$. These estimates have been obtained by replacing $\lVert a_i\rVert_A$ by $E_A(a_i)$ and by replacing $\tau$ by $\mu$ or by $\hat\mu$. The last two estimates, with the addition of $E_B(b_f)$, are also equivalent, and will be useful for the generalization to nonregular wreath products.

\begin{proof}
 First, observe this property of the equivalence of functions: if $f\sim f'$ then $f+g\sim f'+g$.

  The first thing is to check that $\mu\sim \widehat{\mu}$. For this, consider $K$ the complete graph on the support, with weights the distances in $B$, and $T$ a minimum spanning tree on $K$. Also, repeat the same process using $E_B$ instead of the actual word metric. That will give rise to another complete graph $K'$ with weights the estimates of the distance, on which you can compute a new minimum spanning tree, $T'$. Observe that $\mu(e, b_i, b_f)$ is the total weight of $T$, while $\hat{\mu}(e, b_i, b_f)$ is the total weight of $T'$.

 Taking $T$, the tree can be mapped (edge-wise) to $K'$. Even though the weights will be different, it is just up to a constant $C$. The total weight of the image will be greater than the weight of $T'$, because $T'$ is minimal. So $\mu(e, b_i, b_f)>C\cdot\hat{\mu}(e, b_i, b_f)$  The same works for the opposite inequality.

 For the last two estimates, observe that $E_B(b_f)<\hat{\mu}(e, b_i, b_f)$, because the $\hat{\mu}$ contains a path from $e$ to $b_f$.

 Finally, to check that we can use the estimate for $A$ instead of the actual metric, when changing $\sum_{i=1}^s\lVert a_i\rVert_A$ for $\sum_{i=1}^sE_A(a_i)$, the multiplicative constant does not change. From the additive constant $D$, we will have a term $sD$, with $s$ not being a constant. However, since $s$ is the size of the support, it will always be less than the size of the minimum spanning tree.

\end{proof}

\section{Applications: subgroup distortion}

\label{sectiondistortion}
Using this metric estimate, we can compute distortions for the subgroups in $A\wr B$. Doing so in general is no easy task, because the subgroups of a wreath product might depend on the product itself, and they are not necessarily a product of subgroups of $A$ and $B$. However, we can study some particular, natural cases.

\subsection{Cyclic subgroups}

Given $A\wr B$ finitely presented groups, and $x\in A\wr B$, we want to compute the distortion function for $\<x\>$, the subgroup generated by $x$, in the ambient group.

\begin{thm}
 Let $A$ and $B$ be finitely presented groups, and $x\in A\wr B$ an element from their wreath product, and assuming $x=\left(\prod_{i=1}^sb_ia_ib_i^{-1}\right) b_f$, we have that $\Delta_{\<x\>}^{A\wr B}$ equals

 \begin{itemize}
  \item $\max_{a_i}(\Delta_{\<a_i\>}^A(n))$, when $b_f$ is either the identity or a torsion element in $B$.
  \item $\Delta_{<b_f>}^B$, if $b_f$ is not a torsion element, and the size of the support of $x^n$ stabilizes after some $n$.
  \item $n$ (non-distorted), otherwise.
 \end{itemize}

\end{thm}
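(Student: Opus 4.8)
The plan is to analyse the powers $x^n$ directly and feed them into the metric estimate obtained above. Write $x=\left(\prod_{i=1}^{s}b_ia_ib_i^{-1}\right)b_f$. In the lamplighter picture $x^n$ visits, for every $0\le k<n$ and every $i$, the vertex $b_f^{k}b_i$ and multiplies the lamp there by $a_i$, and the cursor ends at $b_f^{n}$; so $x^n$ is the base--group element carrying at each vertex $p$ the value $v_p(n)=\prod a_i$, the product ranging (in the induced order) over the pairs $(k,i)$ with $b_f^{k}b_i=p$, followed by $b_f^{n}$. By the metric estimate,
\[
\lVert x^n\rVert_{A\wr B}=\sum_{p}\lVert v_p(n)\rVert_A+\tau(e,\operatorname{supp}(x^n),b_f^{n}),
\]
and since $\lVert x^n\rVert_{\langle x\rangle}=|n|$ (the case in which $x$ has finite order is trivial, $\langle x\rangle$ then being finite) we have $\Delta_{\langle x\rangle}^{A\wr B}(m)=\max\{\,n:\lVert x^n\rVert_{A\wr B}\le m\,\}$. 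So everything reduces to the asymptotics of the sequence $\lVert x^n\rVert_{A\wr B}$, and the three cases in the statement are exactly three regimes for it.

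First let $b_f$ have finite order $d$. Then $x^d$ lies in the base group and is supported on the \emph{fixed} finite set $S=\{\,b_f^{k}b_i:0\le k<d,\ 1\le i\le s\,\}$, with value $w_p\in A$ at $p\in S$, so $x^{dn}$ carries $w_p^{n}$ at $p$. The travelling--salesman term over the fixed set $S$ is bounded, and $\lVert x^n\rVert_{A\wr B}$ differs from $\lVert x^{d\lfloor n/d\rfloor}\rVert_{A\wr B}$ by at most $d$ copies of $x$, whence
\[
\lVert x^n\rVert_{A\wr B}\sim\sum_{p\in S}\lVert w_p^{\lfloor n/d\rfloor}\rVert_A\sim\max_{p\in S}\lVert w_p^{\lfloor n/d\rfloor}\rVert_A .
\]
Substituting this into the formula for $\Delta$ identifies $\Delta_{\langle x\rangle}^{A\wr B}$, up to equivalence, with the distortion function built from the cyclic subgroups $\langle w_p\rangle\le A$; when the vertices $b_f^{k}b_i$ are pairwise distinct these are exactly the $\langle a_i\rangle$, and one is led to the expression in the statement, the general case being the same with the merged values $w_p$ in place of the individual $a_i$.

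Now let $b_f$ have infinite order. The preliminary point is that for fixed $p$ and $i$ the equation $b_f^{k}b_i=p$ has at most one solution $k$, so each $v_p(n)$ is a product of at most $s$ of the $a_i$; hence $\lVert v_p(n)\rVert_A$ is bounded independently of $n$ and $p$, and $\sum_p\lVert v_p(n)\rVert_A\le C\,|\operatorname{supp}(x^n)|$. Partition $\{1,\dots,s\}$ by $i\approx j$ iff $b_ib_j^{-1}\in\langle b_f\rangle$: the vertex sets written by the generators eventually coincide within a class and eventually separate between classes, so far out along each class the eventual lamp value is a single element $Q_{[i]}\in A$, and ``the size of $\operatorname{supp}(x^n)$ stabilises'' turns out to be equivalent to $Q_{[i]}=e$ for every class. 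In that situation only finitely many vertices are ever permanently lit, and they lie within a bounded distance of $e$, while every other lit vertex of $x^n$ was written during the last $W$ steps, $W$ bounding the formation delay of a class, and hence lies within a bounded distance of the cursor $b_f^{n}$; a path that makes a bounded detour near $e$, runs along a geodesic to $b_f^{n}$, and makes a bounded detour near $b_f^{n}$ then gives $\tau(e,\operatorname{supp}(x^n),b_f^{n})=\lVert b_f^{n}\rVert_B+O(1)$. Since $\lVert b_f^{n}\rVert_B\to\infty$ this yields $\lVert x^n\rVert_{A\wr B}\sim\lVert b_f^{n}\rVert_B$, so $\Delta_{\langle x\rangle}^{A\wr B}=\Delta_{\langle b_f\rangle}^B$. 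If instead some $Q_{[i]}\neq e$, then infinitely many vertices along that class are permanently lit and, by the eventual periodicity of the pattern, appear at a positive rate, so $|\operatorname{supp}(x^n)|\gtrsim n$; since a travelling--salesman path through $N$ distinct vertices has length at least $N-1$ while always $\lVert x^n\rVert_{A\wr B}\le n\lVert x\rVert_{A\wr B}$, we get $\lVert x^n\rVert_{A\wr B}\sim n$ and $\langle x\rangle$ is undistorted.

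The main obstacle is the stabilising case: one has to show that the lit vertices of $x^n$ never stray far from the path traced by the cursor, and this is precisely where the eventual periodicity of the lamp pattern along the rays, together with the routine but fiddly bookkeeping of formation delays and of which vertices remain permanently lit, must be made precise. The dichotomy ``$\operatorname{supp}(x^n)$ stabilises $\iff$ every $Q_{[i]}=e$'' needs comparable care, being the exact boundary between the last two cases; and in the torsion case one must remember to work with the merged values $w_p$, not the individual $a_i$, whenever the vertices $b_f^{k}b_i$ collide.
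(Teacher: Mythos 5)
Your strategy is the same as the paper's: expand $x^n$ in the lamplighter picture, split into the torsion case and the infinite-order case, and in the latter distinguish according to whether the support of $x^n$ stabilizes, feeding everything into the metric estimate. Where you differ is in the level of precision, and the differences are worth recording. The paper's own proof is largely pictorial; you make three of its tacit steps explicit: (i) in the torsion case the relevant cyclic subgroups of $A$ are generated by the \emph{merged} lamp values $w_p$ (products of those $a_i$ landing on the same vertex $b_f^k b_i$), not by the individual $a_i$ -- the paper hides this inside ``pass to $x^m$''; (ii) the dichotomy ``support size stabilizes'' is characterized algebraically as the vanishing of the eventual lamp products $Q_{[i]}$ along each $\langle b_f\rangle$-coset class, which is exactly the unproved ``it is easy to see'' step in the paper; (iii) in the non-stabilizing case your lower bound $\tau\geq|\operatorname{supp}(x^n)|-1\gtrsim n$ is cleaner and more defensible than the paper's claim that the tree ``must visit each iteration of $b_f^k$.'' You also correctly isolate the one genuinely delicate point (that in the stabilizing case all lit vertices lie in bounded neighbourhoods of $e$ and of $b_f^n$); your coset-class bookkeeping does yield this, and your proof is at least as complete as the paper's there.

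One substantive warning: your (correct) asymptotic $\lVert x^{n}\rVert_{A\wr B}\sim\max_{p}\lVert w_p^{\lfloor n/d\rfloor}\rVert_A$ does \emph{not} lead to the expression in the statement if you actually invert it. The largest $n$ with $\max_p\lVert w_p^{n}\rVert_A\leq m$ is controlled by the \emph{least} distorted $w_p$ of infinite order, so the substitution gives $\Delta_{\langle x\rangle}^{A\wr B}(m)\sim\min_p\Delta_{\langle w_p\rangle}^{A}(m)$ rather than the $\max$ claimed in the theorem (take $A$ containing both a quadratically distorted element and an undistorted one to see that $\max$ and $\min$ genuinely differ). The paper's proof simply asserts the $\max$ without this computation, so your more careful route exposes a discrepancy with the stated result; do not paper over it with ``one is led to the expression in the statement,'' but rather state which expression your computation actually produces.
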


\begin{proof}

Considering $x=\left(\prod_{i=1}^sb_ia_ib_i^{-1}\right) b_f$, we have to study the behavior of the powers $x^n$ of $x$.

Since $b_f$ will be marking the starting point for the next copy of $x$, the first consideration is whether $b_f$ is back at the identity or not. If $b_f$ is the identity, then $x=\left(\prod_{i=1}^sb_ia_ib_i^{-1}\right)$, and $x^n=\left(\prod_{i=1}^sb_ia_i^nb_i^{-1}\right)$, because each iteration of $x$ will be starting from the same point. In this case, it is clear that the elements of $B$ do not change, the only powers appearing are powers of the $a_i$. Then

\begin{equation*}
 \Delta_{\<x\>}^{A\wr B}(n)=\max_{a_i}\Delta_{\<a_i\>}^A(n)
\end{equation*}

Also observe that if $b_f$ is a torsion element in $B$, and $b_f^m=1$, then $x^m$ ends on the identity, and we can instead compute the distortion for $\<x^m\>$, which will be equivalent.

If $b_f$ is not a torsion element, then each iteration of $x$ starts from a different point. Even though, for a particular element of the support, two or more instances of $x$ might overlap, only a finite amount will. So only bounded powers of the $a_i$ may appear. This overlapping can imply, however, than the elements in the support cancel out. Some of them, at the beginning or the end, might not cancel, while the elements in the middle will. Formally, if $b_i$ is an element in the support of $x$, call $b_{i_k}$ the $b_i$ as written from the beginning of the $k-th$ iteration. That is, $b_{i_k}=b_f^{k-1}b_i$. Then, suppose there exist $m_1, m_2>0$ and an $n_0$ such that for $n>n_0$, for $m_1\leq k \leq n-m_2$ we have $a_{i_k}=\prod_l a_{i_l}=1$, for all $i$. It is easy to see that this happens if and only if the size of the support stabilizes after $n_0$.

In this case, as $n$ grows past $n_0$, the $a_i$ are fixed, while the $b_i$ and $b_f$ change. Consider the metric estimate:

\begin{equation*}
 E(x^n)=\sum_{i=1}^s \lVert a_i \rVert + \hat{\mu}(e, b_i, b_f^n)
\end{equation*}

The sum of $a_i$ does not change, while the minimum spanning tree has an edge proportional to $b_f^n$ (see figure), while the other edges stay constant (in size). As such, the $\hat{\mu}(e, b_i, b_f^n)$ (and the whole estimate) grows as $b_f^n$. But notice that $b_f^n$ might be distorted, so the distortion $\Delta_{\<x\>}^{A\wr B}$ will be the same as $\Delta_{\<b_f\>}^{B}$

For a graphic representation, if this was $x$

\begin{center}
 \includegraphics[scale=1,keepaspectratio=true]{./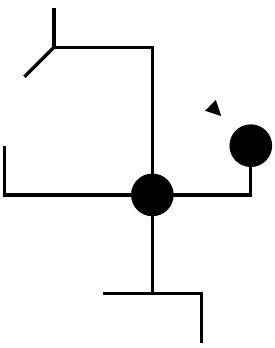}
\end{center}

where the black circles are $e_B$ and $b_f$, then this could be $x^n$, supposing all elements from the support in the intermediate steps were eliminated along the way.

\begin{center}
 \includegraphics[scale=1,keepaspectratio=true]{./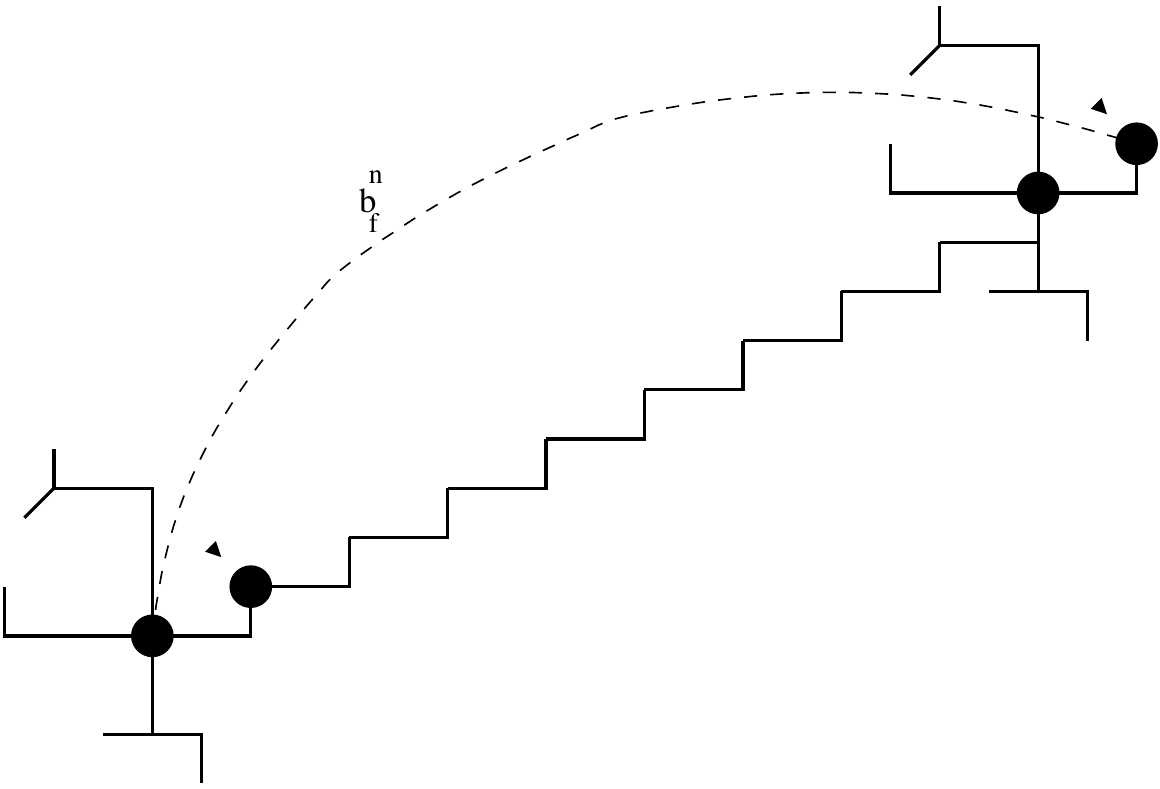}
\end{center}

In the case where the support does not stabilize, at each iteration of $x$ in $x_n$ there has to be an $a_{i_k}$ which is not the identity in any future iteration. Notice that if this is true for any $a_{i_k}$, it will be also true for $a_{i_{k+1}}$, except when close enough to $n$. As in the previous case, only finite powers of the $a_i$ appear, but in this case the $\hat{\mu}(e, b_i, b_f^n)$ must visit each iteration of $b_f^k$ individually (see figure), so it will be proportional to $n\lVert b_f\rVert$ instead of $b_f^n$. In this case, the total metric estimate $E(x^n)$ will be proportional to $n$ times $E(x)$. In this case, the subgroup is undistorted.

For this case, with an $x$ similar to the previous example, but with an element in the support which is never eliminated (the white dot)

\begin{center}
 \includegraphics[scale=1,keepaspectratio=true]{./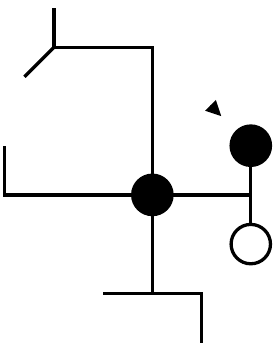}
\end{center}

In this case, the tree must visit each iteration of $b_f$, therefore even if $b_f^n$ can be written with a shorter word, the length of his element will still be proportional to $n\lVert b_f\rVert$.

\end{proof}

\subsection{Subgroups of the factors}

Another natural and interesting type of subgroups of a wreath product are those arising from the subgroups of one of the factors and taking again a wreath product.

That is, let $A$ and $B$ be groups, and $A\wr B$ their wreath product. Now consider subgroups $A'<A$ and $B'<B$. We can now study the distortion of subgroups of the form $A'\wr B$, $A\wr B'$ or even $A'\wr B'$ inside $A \wr B$.

As one would expect, these distortion functions will depend greatly on the distortion functions for $A'<A$ and $B'<B$. They will also depend on whether they are superadditive.

\begin{defn}

A function $f:\mathbb{N} \rightarrow \mathbb{N}$ is said to be \emph{superadditive} if it satisfies $f(x+y)\geq f(x)+f(y)$ for every $x,y\in \mathbb{N}$.

\end{defn}

Since we are talking of distortion functions, which we can change up to constants by changing the presentation, we will only be interested in whether the functions are equivalent to a superadditive function.

\begin{prop}

 Let $A$ and $B$ be finitely generated groups, and $A'<A$ a finitely generated subgroup. Let $\Delta_{A'}^A(n) \sim f(n)$. Then $f(n)\leq \Delta_{A'\wr B}(n)\leq nf(n)$, and in particular $\Delta_{A'\wr B}(n) \sim f(n)$ if $f(n)$ is superadditive.
\end{prop}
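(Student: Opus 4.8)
The plan is to reduce the statement to the metric estimate for wreath products. Write an element of $A\wr B$ (or of its subgroup $A'\wr B$) in normal form $x=\left(\prod_{i=1}^s b_i a_i b_i^{-1}\right)b_f$, with every $a_i\neq 1$; this can always be arranged. Applying the first metric estimate from the previous section to both $A\wr B$ and to $A'\wr B$,
\begin{equation*}
 \|x\|_{A\wr B}\sim\sum_{i=1}^s\|a_i\|_A+\mu(e,b_i,b_f),\qquad\|x\|_{A'\wr B}\sim\sum_{i=1}^s\|a_i\|_{A'}+\mu(e,b_i,b_f),
\end{equation*}
and the key observation is that the spanning-tree term $\mu(e,b_i,b_f)$ is computed in the Cayley graph of $B$, so it is \emph{the same} quantity for both groups. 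Hence the distortion of $A'\wr B$ is governed entirely by the gap between $\sum\|a_i\|_A$ and $\sum\|a_i\|_{A'}$, that is, by $\Delta_{A'}^A$. Without loss of generality $f$ is nondecreasing (otherwise replace it by $n\mapsto\max_{m\le n}f(m)$), and I will use that any distortion function of an infinite subgroup is at least linear, so $f(n)\gtrsim n$ (if $A'$ is finite the statement is vacuous).

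First I would prove the lower bound. Fix $n$ and pick $a\in A'$ with $\|a\|_A\le n$ realizing $\|a\|_{A'}=\Delta_{A'}^A(n)$. View $a$ as the element of $A'\wr B$ whose normal form is just $a$ (support $\{e\}$, $b_f=1$). Then $\|a\|_{A\wr B}\sim\|a\|_A\le n$ while $\|a\|_{A'\wr B}\sim\|a\|_{A'}=\Delta_{A'}^A(n)\sim f(n)$, so $\Delta_{A'\wr B}(n)\gtrsim f(n)$.

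Next the upper bounds. Let $x$ as above satisfy $\|x\|_{A\wr B}\le n$, so that $\sum_i\|a_i\|_A\le n$ and $\mu(e,b_i,b_f)\le n$; since each $\|a_i\|_A\ge1$ this also gives $s\le n$. Using $\|a_i\|_{A'}\le\Delta_{A'}^A(\|a_i\|_A)$, that $\Delta_{A'}^A\sim f$, and monotonicity of $f$ together with $\|a_i\|_A\le n$, one obtains
\begin{equation*}
 \|x\|_{A'\wr B}\sim\sum_{i=1}^s\|a_i\|_{A'}+\mu(e,b_i,b_f)\lesssim\sum_{i=1}^s f(\|a_i\|_A)+s+\mu(e,b_i,b_f)\le s\,f(n)+s+n\lesssim n\,f(n),
\end{equation*}
whence $\Delta_{A'\wr B}(n)\lesssim n f(n)$. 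If moreover $f$ is equivalent to a superadditive function, take $f$ itself superadditive and nondecreasing; then $\sum_i f(\|a_i\|_A)\le f\!\left(\sum_i\|a_i\|_A\right)\le f(n)$ by superadditivity and monotonicity, so the previous estimate improves to $\|x\|_{A'\wr B}\lesssim f(n)+n\lesssim f(n)$, the last step using $f(n)\gtrsim n$. With the lower bound this yields $\Delta_{A'\wr B}(n)\sim f(n)$.

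The bulk of the work is routine constant-chasing inside these equivalences. The points needing care are: the bound $s\le n$ on the size of the support, needed so that the additive constant coming from $E_A\sim\|\cdot\|_A$ does not accumulate (handled exactly as in the proof of the metric-estimate theorem, via $s\le\sum\|a_i\|_A\le n$); the appeal to $f(n)\gtrsim n$ to absorb the $\mu$-term in the superadditive case; and the inequality $\sum_i f(\|a_i\|_A)\le f(\sum_i\|a_i\|_A)$, the one genuinely nontrivial step and the place where the superadditivity hypothesis is actually used. I expect this superadditive upper bound to be the main obstacle.
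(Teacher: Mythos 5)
Your proposal is correct and follows essentially the same route as the paper's proof: reduce to the metric estimate, observe that the spanning-tree term $\mu(e,b_i,b_f)$ is common to $A\wr B$ and $A'\wr B$, use superadditivity to collapse $\sum_i f(\lVert a_i\rVert_A)$ into $f\bigl(\sum_i\lVert a_i\rVert_A\bigr)$, and otherwise pay the factor $s\leq n$. Your treatment is in fact slightly more careful than the paper's about monotonicity of $f$ and about absorbing the $\mu$-term via $f(n)\gtrsim n$, but the argument is the same.
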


\begin{proof}
 It is clear than $\Delta_{A'\wr B}(n)\geq f(n)$, because we have $A'$ inside the wreath product, for example as elements of $A$ in the identity index, and trivial elements everywhere else, and using the metric estimate, this elements will have the metrics of $A'$ and $A$. Because of that, the element that gave the worst case metric for $A'<A$ will still give the same distortion. However, the distortion could be greater.

 To check this, we will use the metric estimate. Let $x=\left(\prod_{i=1}^sb_ia_ib_i^{-1}\right) b_f$, with $a_i\in A'$, be an element in $A'\wr B$ with $E_{A'\wr B}(x)<n$. Then:

$$
  E_{A'\wr B}(x)=\sum_{i=1}^s E_{A'}(a_i)+\mu(e, b_i, b_f)
  \leq \sum_{i=1}^s f(E_{A}(a_i))+\mu(e, b_i, b_f)
$$

Observe that by definition of distortion, $E_{A'}(a)<f(E_A(a))$. Also, since $f$ is an increasing function, $f(n)>n$ for any $n$. Now, if $f$ was superadditive, or equivalent to a superadditive function, this calculation could be continued:

\begin{equation*}
 \sum_{i=1}^s f(E_{A}(a_i))+\mu(e, b_i, b_f)\leq f\left(\sum_{i=1}^s E_{A}(a_i)\right)+\mu(e, b_i, b_f)<f(E_{A\wr B}(x))
\end{equation*}

If $f$ was not equivalent to a superadditive function, the continuation could be instead:

\begin{equation*}
 \sum_{i=1}^s f(E_{A}(a_i))+\mu(e, b_i, b_f)\leq sf(\sum_{i=1}^s E_{A}(a_i))+\mu(e, b_i, b_f)<sf(E_{A\wr B}(x))
\end{equation*}

 and using that $s$ is less than the length of $x$, the theorem is proven.

\end{proof}

An analogous theorem can be proven for $B'<B$, although one must take care of the possibility of the $\mu$ being different when calculated on $B$ and $B'$.

\begin{prop}

 Let $A$ and $B$ be finitely generated groups, and $B'<B$ a finitely generated subgroup. Let $\Delta_{B'}^B(n) \sim f(n)$. Then $f(n)\leq\Delta_{A\wr B'}^{A\wr B}(n)\leq nf(n)$, and in particular $\Delta_{A\wr B'}^{A\wr B}(n) \sim f(n)$ if $f(n)$ is superadditive.
\end{prop}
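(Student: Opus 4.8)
The plan is to mirror the argument for $A' \wr B < A \wr B$ from the previous proposition, using the metric estimates from Theorem~2.8, but being careful that the weight of a minimum spanning tree on the support can change when we pass from the word metric of $B$ to that of $B'$. Concretely, I would use estimate~(4) of Theorem~2.8 for the subgroup $A \wr B'$, writing $E_{A\wr B'}(x) = \sum_{i=1}^s \lVert a_i\rVert_A + \hat\mu_{B'}(e, b_i, b_f)$, where $\hat\mu_{B'}$ is computed from the complete graph whose edge weights are the word-distances in $B'$; for $A \wr B$ I would use estimate~(1), with the ordinary $\mu = \mu_B$ computed from distances in $B$. Since $B' < B$, every pair $b_i, b_j$ lying in the coset structure of the support of $x$ (an element of $A\wr B'$ necessarily has support in $B'$) satisfies $\lVert b_i^{-1}b_j\rVert_B \le \lVert b_i^{-1}b_j\rVert_{B'}$, and by definition of distortion $\lVert b_i^{-1}b_j\rVert_{B'} \le f(\lVert b_i^{-1}b_j\rVert_B)$ up to the equivalence constants.

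First I would establish the lower bound $f(n) \le \Delta_{A\wr B'}^{A\wr B}(n)$ exactly as before: embed $B'$ into $A\wr B'$ as elements of $B'$ with trivial lamp configuration (i.e.\ $s=0$, $b_f \in B'$); then $E_{A\wr B'}$ restricts to $E_{B'} \sim \lVert\cdot\rVert_{B'}$ and $E_{A\wr B}$ restricts to $E_B \sim \lVert\cdot\rVert_B$ on these elements, so the worst-case element witnessing $\Delta_{B'}^B \sim f$ still witnesses distortion at least $f$ inside the wreath product. Next, for the upper bound, take $x = \left(\prod_{i=1}^s b_i a_i b_i^{-1}\right) b_f \in A\wr B'$ with $E_{A\wr B}(x) \le n$; I want to bound $E_{A\wr B'}(x)$. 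The lamp part $\sum \lVert a_i\rVert_A$ is identical in both, so the whole issue is comparing $\hat\mu_{B'}(e,b_i,b_f)$ with $\mu_B(e,b_i,b_f)$. Fix a minimum spanning tree $T$ in the $B$-weighted complete graph on $\{e, b_i, b_f\}$; map it edgewise to the $B'$-weighted graph. Each edge of weight $\lVert b_j^{-1}b_k\rVert_B$ acquires weight $\lVert b_j^{-1}b_k\rVert_{B'} \le f(\lVert b_j^{-1}b_k\rVert_B)$, so the image tree has total weight at most $\sum_{e \in T} f(w_B(e))$, and since $\hat\mu_{B'}$ is the minimal such weight, $\hat\mu_{B'}(e,b_i,b_f) \le \sum_{e\in T} f(w_B(e))$. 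If $f$ is superadditive this is at most $f\!\left(\sum_{e\in T} w_B(e)\right) = f(\mu_B(e,b_i,b_f)) \le f(E_{A\wr B}(x)) \le f(n)$; combined with the trivially-bounded lamp contribution and the monotonicity $f(n) \ge n$, this gives $E_{A\wr B'}(x) \le f(n)$ up to constants, hence $\Delta_{A\wr B'}^{A\wr B}(n) \preceq f(n)$. Without superadditivity, one instead bounds $\sum_{e\in T} f(w_B(e)) \le (\#T)\, f(\mu_B(e,b_i,b_f))$, and since $\#T$, the number of edges in the spanning tree, is $s+1$ and hence at most the length of $x$, one obtains $\Delta_{A\wr B'}^{A\wr B}(n) \le n f(n)$.

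The main obstacle, and the only place this proof differs in substance from the $A'\wr B$ case, is the control of the spanning tree weights under the change of metric on the base group: a priori a minimum spanning tree for the $B'$-weights need not be a minimum (or even small) spanning tree for the $B$-weights, so one cannot simply ``push forward'' optimal trees in the naive direction. The fix is the one indicated above — push the $B$-optimal tree $T$ forward into the $B'$-weighted graph (which only increases each edge weight, in a controlled, distortion-bounded way) and then invoke minimality of $\hat\mu_{B'}$; this direction works because minimality is being used as an upper bound on $\hat\mu_{B'}$. One must also note that distances between support points are genuinely distances in $B'$ (the support of an element of $A\wr B'$ lives in $B'$, and $b_i^{-1}b_j \in B'$), so that $\Delta_{B'}^B$ applies to each edge weight; this is what makes the edgewise comparison legitimate. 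The remaining steps — absorbing the additive constants from $E_{A'} \sim \lVert\cdot\rVert$-type equivalences into the $\mu$-term via $s \le \lVert x\rVert$, and the observation $f(n) \ge n$ for increasing distortion functions — are routine and identical to the previous proposition.
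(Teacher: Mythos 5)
Your proposal is correct and follows essentially the same route as the paper: the lower bound via embedding $B'$ with trivial lamp configuration, and the upper bound by transporting the $B$-minimal spanning tree edgewise into the $B'$-weighted graph, invoking minimality of $\mu'$ there, applying the distortion function to each edge weight, and then using superadditivity (or the factor $s+1\leq \lVert x\rVert$) to conclude. Your explicit remark about the direction in which minimality must be used is exactly the point the paper's proof also hinges on.
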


\begin{proof}
 The proof is very similar to the previous theorem, and we will only show that $\mu'<f(\mu)$, with $\mu'$ being the one calculated on $B'$.

 Let $x=\left(\prod_{i=1}^sb_ia_ib_i^{-1}\right) b_f$, with all the $b_i$ and $b_f$ in $B'$. We know that $\mu'(e, b_i, b_f)$ is the sum of some distances (at most $s+1$ distances) in $B'$. However, the $\mu(e, b_i, b_f)$, calculated in $B$, can contain different edges, not just different distances for the same edges.

 Because of this, consider $\sum_{i=1}^{s+1}d'_{B'}$ the distances of the $\mu'(e, b_i, b_f)$, and $\sum_{i=1}^{s+1}d_{B}$ the distances of the $\mu(e, b_i, b_f)$ in $B$. Furthermore, consider the sum of distances $\sum_{i=1}^{s+1}d_{B'}$, which are the distances corresponding to the edges in the $\mu(e, b_i, b_f)$, but with distances on $B'$.

 Now, since $\sum_{i=1}^{s+1}d_{B'}$ are not from a $\mu$, we have:

 \begin{equation}
  \mu'(e, b_i, b_f)=\sum_{i=1}^{s+1}d'_{B'}<\sum_{i=1}^{s+1}d_{B'}<\sum_{i=1}^{s+1}f(d_{B})
 \end{equation}

 Which will be at most $sf(\mu(e, b_i, b_f))$, and $f(\mu(e, b_i, b_f))$ if $f$ is superadditive.

\end{proof}

Finally, we can mix the two propositions in a more general result.

\begin{thm}
 Let $A$ and $B$ be finitely generated groups, and $A'<A$ and $B'<B$ finitely generated subgroups with respective distortions $\Delta_{A'}^A(n)=f(n)$ and $\Delta_{B'}^B(n)=g(n)$. Then we have $\Delta_{A'\wr B'}^{A\wr B}(n)\geq (f+g)(n)\sim max(f(n),g(n))$.

 Also, depending on whether $f$ and $g$ are equivalent to a superadditive function, we have:

\begin{itemize}
 \item if both $f$ and $g$ are superadditive, $\Delta_{A'\wr B'}^{A\wr B}(n)\leq max(f(n),g(n))$
 \item if $f$ is superadditive, but $g$ is not, $\Delta_{A'\wr B'}^{A\wr B}(n)\leq max(f(n),ng(n))$
 \item if $g$ is superadditive, but $f$ is not, $\Delta_{A'\wr B'}^{A\wr B}(n)\leq max(nf(n),g(n))$
 \item if neither are superadditive, $\Delta_{A'\wr B'}^{A\wr B}(n)\leq max(nf(n),ng(n))$
\end{itemize}

\end{thm}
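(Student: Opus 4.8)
The plan is to obtain the theorem by merging the arguments of the two preceding propositions, separating at every step the contribution of the $A'$-coordinates from that of the $B'$-cursor and finishing with the stated case analysis on superadditivity. Throughout I will use the metric estimates supplied by the theorem on estimates in wreath products: for $A\wr B$ the estimate $E_{A\wr B}(x)=\sum_{i=1}^s\lVert a_i\rVert_A+\mu(e,b_i,b_f)$ with the exact metrics of $A$ and $B$, and for $A'\wr B'$ the analogous $E_{A'\wr B'}(x)=\sum_{i=1}^s\lVert a_i\rVert_{A'}+\mu'(e,b_i,b_f)$, where $\mu$ is the minimum spanning tree weight computed in the Cayley graph of $B$ and $\mu'$ the one computed in the Cayley graph of $B'$.

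For the lower bound I would exhibit two families of elements. First, $A'\wr B'$ contains the copy of $A'$ consisting of the elements supported only at the index $e$, with value in $A'$ and with $b_f=e$; for such an element the spanning tree is trivial, so its $E_{A\wr B}$-value is just $\lVert a\rVert_A$ and its $E_{A'\wr B'}$-value is just $\lVert a\rVert_{A'}$. Taking $a$ realising $\Delta_{A'}^A$, exactly as in the proposition on $A'\wr B$, gives $\Delta_{A'\wr B'}^{A\wr B}(n)\geq f(n)$. Symmetrically, the elements with empty support and final position $b_f\in B'$ give a copy of $B'$ with $E_{A\wr B}$-value $\lVert b_f\rVert_B$ and $E_{A'\wr B'}$-value $\lVert b_f\rVert_{B'}$, so $\Delta_{A'\wr B'}^{A\wr B}(n)\geq g(n)$. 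Hence $\Delta_{A'\wr B'}^{A\wr B}(n)\geq\max(f(n),g(n))$, and since $\max(f,g)\leq f+g\leq 2\max(f,g)$ this is the claimed $(f+g)(n)\sim\max(f(n),g(n))$ (placing a worst element of $A'$ at the index $e$ together with a worst final position in $B'$ shows the sum itself is attained up to the usual rescaling).

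For the upper bound, let $x=\left(\prod_{i=1}^s b_ia_ib_i^{-1}\right)b_f\in A'\wr B'$ with $E_{A\wr B}(x)\leq n$. By the definition of distortion, $\lVert a_i\rVert_{A'}\leq f(\lVert a_i\rVert_A)$ for each $i$, whence $\sum_i\lVert a_i\rVert_{A'}\leq\sum_i f(\lVert a_i\rVert_A)$, which is at most $f\!\left(\sum_i\lVert a_i\rVert_A\right)$ when $f$ is superadditive and at most $s\,f\!\left(\sum_i\lVert a_i\rVert_A\right)$ in general. For the tree term, the comparison performed in the proof of the proposition on $A\wr B'$ (reweight the spanning tree of $B$ by the metric of $B'$, on which it is no longer minimal) gives $\mu'(e,b_i,b_f)\leq g(\mu(e,b_i,b_f))$ when $g$ is superadditive and $\mu'(e,b_i,b_f)\leq (s+1)\,g(\mu(e,b_i,b_f))$ in general. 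Using $\sum_i\lVert a_i\rVert_A\leq n$, $\mu(e,b_i,b_f)\leq n$, the monotonicity of $f$ and $g$, and the fact that $s$ is bounded by the weight of the minimum spanning tree and hence by $n$, the four cases fall out: if both $f$ and $g$ are superadditive, $E_{A'\wr B'}(x)\leq f(n)+g(n)$; if only $f$ is, $E_{A'\wr B'}(x)\leq f(n)+n\,g(n)$; if only $g$ is, $E_{A'\wr B'}(x)\leq n\,f(n)+g(n)$; and if neither is, $E_{A'\wr B'}(x)\leq n\,f(n)+n\,g(n)$. Combined with $f+g\sim\max(f,g)$, these are exactly the stated upper bounds.

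The one genuinely delicate ingredient is the comparison of $\mu'$ with $\mu$: the minimum spanning trees in the Cayley graphs of $B$ and $B'$ need not share their edge sets, so $g$ cannot simply be applied edge by edge to a single fixed tree; but this is precisely the point already settled inside the proof of the proposition on $A\wr B'$, and it can be invoked verbatim. Everything else is bookkeeping: one must keep the two independent support-size factors apart — the factor $s$ coming from $\sum_i f(\lVert a_i\rVert_A)\leq s\,f(\sum_i\lVert a_i\rVert_A)$ when $f$ is not superadditive, and the factor $s+1$ coming from $\mu'\leq(s+1)g(\mu)$ when $g$ is not — and absorb each into the linear factor $n$ via $s\leq n$, together with the elementary equivalence $f+g\sim\max(f,g)$.
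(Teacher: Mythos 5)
Your argument is correct and follows essentially the same route as the paper: the lower bound comes from the embedded copies of $A'$ (supported at the identity) and of $B'$ (as final position), and the upper bound combines the estimates from the two preceding propositions, with the factors $s$ and $s+1$ appearing exactly when $f$, respectively $g$, fails to be superadditive and then absorbed via $s\leq n$. Your version is in fact more explicit than the paper's about the delicate $\mu'$ versus $\mu$ comparison, but the underlying argument is identical.
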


\begin{proof}
 First, it is clear than the distortion is at least the worst of $A'<A$ and $B'<B$, because both $A'$ and $B'$ are inside the wreath product.

 For the opposite inequality, take $x=\left(\prod_{i=1}^sb_ia_ib_i^{-1}\right) b_f\in A'\wr B'$.
 Using the arguments from the two previous propositions, we can get:

 \begin{equation*}
  E_{A'\wr B'}(x)=\sum_{i=1}^sE_{A'}(a_i) + \mu'(e, b_i, b_f)<s^{e_f}f(\sum_{i=1}^sE_{A}(a_i)) + s^{e_g}g(\mu'(e, b_i, b_f))
 \end{equation*}

 with the $e_f$ and $e_g$ being $0,1$ depending or whether $f$ and $g$ are superadditive. From here, we can use the fact that $f(n), g(n)\geq n$ to draw the conclusion.
\end{proof}

However, this results use superadditivity as a condition, but it could be that the same bound worked even without superadditivity, or that there are no group inclusions with distortion function non-equivalent to a superadditive function. These are both false, and a proof of the second one can be seen in \cite{superadditive}, and we will see an example where a non-superadditive distortion in one factor gives place to a greater distortion for the wreath product.

\begin{prop}
 There are finitely generated groups $A,B$ and a subgroup $A'<A$ such that $f(n)=\Delta_{A'}^A(n)$ is not superadditive and $\Delta_{A'\wr B}^{A\wr B}(n)>f(n)$.
\end{prop}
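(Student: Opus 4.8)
The plan is to exhibit a concrete example where a non-superadditive distortion in the factor $A'<A$ is amplified by the wreath product construction, so that the upper bound $nf(n)$ from Proposition (the $A'\wr B$ case) is actually attained, or at least exceeded by $f(n)$ itself. The key is to choose $f$ to be a function that is subadditive in a strong enough sense that summing many copies of it beats a single application. A natural candidate is a distortion function which grows like $n\log n$ on a sparse set but behaves like a bounded (or linear) function in between — more robustly, one can take $f$ equivalent to $\sqrt{n}$, since $\sqrt{x+y}<\sqrt{x}+\sqrt{y}$, and a sum of $s$ copies of $\sqrt{m}$ (namely $s\sqrt{m}$) is much larger than $\sqrt{sm}$ when the load is spread out. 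So the strategy is: pick $A'<A$ with $\Delta_{A'}^A(n)\sim\sqrt{n}$ (such pairs exist — e.g. inside an appropriate metabelian or nilpotent ambient group, or one may simply cite \cite{superadditive} or \cite{sapir-olshdist} for the realizability of $\sqrt n$ as a distortion function, noting $\sqrt n$ is not superadditive), and take $B=\zz$.

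First I would set up the ambient group $A\wr\zz$ with $A'<A$ having distortion $f(n)\sim\sqrt n$. Next I would construct the test elements: for each $m$, let $y_m\in A'\wr\zz$ be the element with support on the points $0,1,2,\dots,m-1\in\zz$, placing at position $j$ an element $a_j\in A'$ with $\|a_j\|_A\approx m$ but $\|a_j\|_{A'}\approx\sqrt{\|a_j\|_A}\,\cdot\,$(worst-case) — more precisely, choose each $a_j$ to be the element realizing the distortion bound at scale $m$, so $E_A(a_j)\asymp m$ while $E_{A'}(a_j)\asymp f(m)=\sqrt m$, and set $b_f=1$ (or $b_f=m$, it does not matter for the order of magnitude). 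Then I would compute both metric estimates using item (1)/(3) of the main Theorem: in $A\wr\zz$ we get $E_{A\wr\zz}(y_m)\asymp \sum_{j} E_A(a_j)+\mu \asymp m\cdot m + m \asymp m^2$, since the minimum spanning tree on $\{0,\dots,m-1\}$ has weight $\asymp m$. In $A'\wr\zz$ we get $E_{A'\wr\zz}(y_m)\asymp \sum_j E_{A'}(a_j)+\mu' \asymp m\cdot\sqrt m + m\asymp m^{3/2}$.

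Then I would read off the distortion: with $n\asymp m^2$ we have $E_{A'\wr\zz}(y_m)\asymp m^{3/2}\asymp n^{3/4}$, whereas $f(n)=\Delta_{A'}^A(n)\sim\sqrt n=n^{1/2}$. Since $n^{3/4}$ dominates $n^{1/2}$, this gives $\Delta_{A'\wr\zz}^{A\wr\zz}(n)\succeq n^{3/4}\succ f(n)$, which is exactly the claimed strict inequality; moreover $n^{3/4}$ is well within the general upper bound $nf(n)\sim n^{3/2}$, consistent with the Proposition. I would close by remarking that the phenomenon is precisely the failure of superadditivity: spreading the "distortion budget" over $m$ lamps forces the cost $\sum E_{A'}(a_j)=m\sqrt m$ rather than the single-lump cost $\sqrt{m\cdot m}=m$, and this gap is invisible in $A$ but visible in the wreath product because the lamplighter must pay each lamp separately.

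The main obstacle I anticipate is twofold. First, I must be sure that a pair $A'<A$ with distortion exactly equivalent to $\sqrt n$ (a specifically non-superadditive, and in fact subadditive, function) genuinely exists and is finitely generated; this should be handled by citing the known realization results — e.g. \cite{bridsonfrac} gives $n^r$ for rational $r>1$, so for a subadditive example one should instead invoke the constructions in \cite{superadditive} or pick a different non-superadditive $f$ (for instance an $f$ that oscillates, being linear on long stretches and jumping on a sparse sequence) for which realizability is documented. Second, I must verify the lower bound on $\Delta_{A'\wr\zz}^{A\wr\zz}$ rigorously rather than just the estimate: this means confirming not only that $E_{A'\wr\zz}(y_m)$ is large, which follows from the metric estimate Theorem, but also that $E_{A\wr\zz}(y_m)$ is not larger than $\asymp m^2$, i.e. that the $a_j$ can all be chosen to have comparable $A$-length $m$ simultaneously — this is automatic if $A$ contains an element of infinite order with the right distortion behavior, since we may take all $a_j$ equal. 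With those two points pinned down, the computation is routine.
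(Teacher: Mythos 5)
Your guiding intuition --- spread $k$ copies of the worst-distorted element $h$ over $k$ consecutive lamp positions, so that the wreath product is forced to pay $k\cdot\|h\|_{A'}$ while the ambient length is only $k\cdot\|h\|_A$ plus a linear travel cost --- is exactly the paper's construction (the paper takes $x=(ht)^k$ in $A'\wr\zz$). But your concrete instantiation fails at the first step: a function equivalent to $\sqrt n$ can never be a distortion function. Since every generator of $A'$ is a word of bounded length in the generators of $A$, the inclusion is Lipschitz, $\|x\|_{A'}\geq\|x\|_A/C$, and hence $\Delta_{A'}^A(n)\succeq n$ for any infinite finitely generated subgroup. (The paper itself uses ``$f(n)>n$'' in the preceding propositions.) So there is no pair with $f\sim\sqrt n$, no realization result in \cite{bridsonfrac} or elsewhere will supply one, and your entire numerical computation ($E_{A\wr\zz}(y_m)\asymp m^2$, $E_{A'\wr\zz}(y_m)\asymp m^{3/2}$, distortion $\succeq n^{3/4}$) rests on an impossible hypothesis. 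Note also that even formally the conclusion you draw is vacuous: $n^{3/4}\prec n$, and every subgroup distortion is $\succeq n$, so exhibiting elements with $\|y\|_{A'\wr\zz}\asymp n^{3/4}$ proves nothing, and ``$\Delta\succ\sqrt n$'' is automatic for any subgroup whatsoever.

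The only non-superadditive distortion functions that exist are therefore functions that are everywhere $\succeq n$ but oscillate --- linear on one sparse sequence and superlinear (in the Davis--Olshanskii example of \cite{superadditive}, faster than any recursive function) on another. You mention this fallback in passing but do not carry it out, and carrying it out is where the actual content of the proof lies. Concretely, with $f(d_i)=g(d_i)$ huge but $f(d_{i+1}-1)\leq 2(d_{i+1}-1)+1$, one takes $k_i$ copies of the element $h$ realizing the distortion at scale $d_i$, arranged so that $E_{A\wr\zz}(x)=k_id_i+k_i\approx d_{i+1}-1$ lands in a \emph{linear zone} of $f$, while $E_{A'\wr\zz}(x)=k_if(d_i)+k_i$ is enormous; one must then prove the quantitative lemma that for every $C,D$ there is an $i$ with $k_if(d_i)>Cf(d_{i+1}-1)+D$, which follows because otherwise $f(d_i)$ would be linearly bounded, contradicting $f(d_i)=g(d_i)$. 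Without replacing $\sqrt n$ by such a realizable oscillating $f$ and supplying this lemma, the proposal does not establish the proposition.
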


An example of an inclusion with distortion function not equivalent to any superadditive function is needed. In \cite{superadditive} it is proven constructively that such an inclusion exists. We will use that example as $A'<A$. Let $g(n):\mathbb{N}\rightarrow\mathbb{N}$ be a function greater than any recursive function, and let $f(n):\mathbb{N}\rightarrow\mathbb{N}$ be a function that grows as fast as $g(n)$, but which is linear on an infinite sequence. Consider the sequence $d_i\in\mathbb{N}$, such that $f(d_i)=g(d_i)$ but $f(d_i-1)<2d_i-1$. Again, Davis and Olshanskii show an explicit example satisfying these properties, with an inclusion that has $f$ as distortion function.

First we will need a technical lemma:

\begin{lem}
 In these hypotheses, given constants $C,D>0$, and defining $k_i$ such that $k_id_i+k_i<d_{i+1}-1\leq (k_i+1)d_i+(k_+1)$, there exists an $i\in \mathbb{N}$ such that $kf(d_i)>Cf(d_{i+1}-1)+D$.
\end{lem}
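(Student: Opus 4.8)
The plan is to convert the defining double inequality for $k_i$ into an upper bound on $d_{i+1}$, use the near-linearity of $f$ at the points of the form $d_{i+1}-1$ to bound $Cf(d_{i+1}-1)+D$ from above, use $f(d_i)=g(d_i)$ to bound $k_if(d_i)$ from below, and then reduce the whole inequality to the single fact that $g$ eventually dominates every recursive function --- in particular the linear functions. Instead of exhibiting one magic index, I will show the inequality holds for \emph{all} sufficiently large $i$; since $(d_i)$ is an infinite increasing sequence (so $d_i\to\infty$), this gives the existence statement.

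The first step is to read off from $k_id_i+k_i<d_{i+1}-1\le(k_i+1)d_i+(k_i+1)$ the bound $d_{i+1}-1\le(k_i+1)(d_i+1)$, and to note that for all large $i$ we have $k_i\ge1$. The latter is forced by the construction: were $k_i=0$ for all large $i$ the left-hand side $k_if(d_i)$ would vanish while the right-hand side stays at least $D>0$, so the claimed inequality would be false; hence the sequence $(d_i)$ of \cite{superadditive} is spaced so that $d_{i+1}>d_i+2$, and therefore $k_i\ge1$, cofinally. We restrict attention to such $i$, and then also $k_i+1\le2k_i$.

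Next I bound the right-hand side. Applying the defining property of the sequence at the index $i+1$ gives $f(d_{i+1}-1)<2d_{i+1}-1<2d_{i+1}\le2(k_i+1)(d_i+1)\le4k_i(d_i+1)$, and therefore $Cf(d_{i+1}-1)+D<4Ck_i(d_i+1)+D\le k_i\bigl(4C(d_i+1)+D\bigr)$, where $k_i\ge1$ is used to absorb the constant $D$ into the factor $k_i$. On the left-hand side, $k_if(d_i)=k_ig(d_i)$. Hence it suffices to produce an $i$ with $g(d_i)\ge4C(d_i+1)+D$. But $n\mapsto4C(n+1)+D$ is linear, hence recursive, so $g$ dominates it for all large arguments; since $d_i\to\infty$, the inequality $g(d_i)\ge4C(d_i+1)+D$ holds for every large $i$, and the lemma follows.

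The only genuine difficulty is careful bookkeeping: keeping track of the strict versus non-strict inequalities and of the additive constant $D$ so that everything collapses to the clean reduction ``$g$ beats a linear function,'' and making sure we are entitled to assume $k_i\ge1$ (which is needed both for $k_i+1\le2k_i$ and to swallow $D$). It is also worth recording explicitly that ``$g$ is greater than any recursive function'' is used in its eventual-domination form, so that it applies along the particular sequence $(d_i)$; this is exactly the form in which the example of \cite{superadditive} is constructed.
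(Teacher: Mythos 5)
Your proof is correct and is essentially the paper's argument run in the contrapositive direction: both reduce the claim, via the bounds $f(d_{i+1}-1)<2d_{i+1}-1$ and $d_{i+1}-1\le(k_i+1)(d_i+1)$ together with $k_i\ge1$, to the fact that $f(d_i)=g(d_i)$ must eventually exceed an explicit linear function of $d_i$. (The paper assumes the inequality fails for every $i$ and derives $f(d_i)\le C(4d_i+5)+D$; your small off-by-one in $2d_{i+1}\le 2(k_i+1)(d_i+1)$ and your slightly informal justification of $k_i\ge1$ --- which the paper also uses silently --- do not affect the argument.)
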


\begin{proof}
 Assume it is false. That is:

 \begin{equation*}
  \forall i\in\mathbb{N}\quad k_if(d_i)\leq Cf(d_{i+1}-1)+D \leq C(2d_{i+1}-1)+D \leq C(2(k_i+1)d_i+(k+1))+1)+D
 \end{equation*}
 hence
 \begin{equation*}
  f(d_i)\leq C\left(2\frac{k+1}{k}d_i+\frac{2k+3}{k}\right)+D \leq C(4d_i+5)+D
 \end{equation*}

 Which would make $f(d_i)$ linear, contradicting the fact that $f(d_i)=g(d_i)$, which is greater than any recursive function.

 \end{proof}

 And using this we can prove the result.

 \begin{proof}[Proof of the Proposition]
 Let $A'<A$ be the subgroup inclusion with $\Delta_{A'}^A(n)=f(n)$, where $f(d_i)=g(d_i)$, which is greater than any recursive function, but $f(d_i-1)\leq2(d_i-1)+1$. We will use $E(x)=\sum_{i=1}^s\lVert a_i\rVert_A + \mu(e, b_i, b_f)$ as metric estimate for an element of the form $x=\left(\prod_{i=1}^sb_ia_ib_i^{-1}\right) b_f$.

 Take $B=\mathbb{Z}$, we will prove that for any $C,D>0$ there exists an $x\in A'\wr \mathbb{Z}$ such that $E_{A'\wr \mathbb{Z}}(x)>Cf(E_{A\wr \mathbb{Z}}(x))+D$, which will show that the distortion of $A'\wr \mathbb{Z}$ in $A\wr\mathbb{Z}$ is strictly greater than $f$.

 Given $C,D>0$, take the $i$ satisfying the lemma. Take $h\in A'$ such that $\lVert h\rVert_A\leq d_i$ and $\lVert h\rVert_{A'}= f(d_i)$, that is, an element achieving the distortion. Take $k_i$ from the lemma, and let $x=(ht)^k$. Now, using the lemma:

 \begin{align*}
   E_{A\wr\mathbb{Z}}(x)&=\sum_{j=1}^{k_i}\lVert h\rVert_{A'}+\mu(e,k)=k_if(d_i)+k>k_if(d_i)>Cf(d_{i+1}-1)+D \\
    &\geq  Cf(k_id_i+k_i)+D=Cf\left(\sum_{j=1}^{k_i}\lVert h\rVert_{A}+\mu(e,k)\right)+D=Cf(E_{A\wr \mathbb{Z}}(x))+D
 \end{align*}

 \end{proof}

\section{On the non-regular case}

Finally, the metric estimation defined for wreath products can be extended for non-regular wreath products. That is, where the action of the group $B$ on $\Omega$ is not regular. In this case, the actual metric is not just computationally hard, but it is also difficult from a theoretic point of view. However, the metric estimate can be generalized with a bit of work so that it still works and the results on distortion can be analogously proven with it.

Observe, first, that in the non-transtive case, there are several orbits. In this case, the presentation needs a copy of the generators of $A$ for each orbit. For this reason, even if the action is not transitive, it must have a finite number of orbits. In the lamplighter interpretation, there is a pointer in each orbit (an starting point $e_j$ must be chosen), and when an element of $B$ acts on $\Omega$ it moves each pointer accordingly. Naming the orbits $\Omega_1, \dots \Omega_{\omega}$, we will name this sets of generators as $A_j$, and a word $a_j$ will be interpreted as being a word from $A$ on the generators $A_j$.

With this, and with $s$ being the support, we can write any element as:

\begin{equation*}
 x=\left(\prod_{i=1}^sb_ia_{j_i}b_i^{-1}\right) b_f
\end{equation*}

with $i_j$ being the orbit corresponding for the $i$th element of the support.

Although any element can be written as such, it is unclear how to deduce an optimal word from this form, and it is not as simple as reproducing the Traveling Salesman Problem, since we don't have an obvious metric space, and there might be several words in $B$ that work for the elements in a given orbit, but might not be optimal for the other orbits.

To build an analogous estimation in this case, we first need a graph equivalent to the Cayley graph of $B$. Given $\Omega$, consider the graph $(\Omega, E)$ with vertices in the elements of $\Omega$ and a directed edge between two vertices if there is a generator of $B$ sending one element to the other. In the transitive case, this will be the Schreier graph of the stabilizer of a point. In the non-transitive case, the graph will have several connected components, as many as orbits, and each one will be equivalent to the Schreier graph of the stabilizer of a point from that orbit.

Given an element $x$ in the wreath product, let $\mu_j(e_j, e_jb_{i_j}, e_jb_f)$ be the minimum spanning tree that spans $e_j$, the starting point in orbit $j$, the vertices in the orbit that are in the support, that is $e_jb_i$, and the ending point in that orbit, which is $e_jb_f$. This minimum spanning tree is calculated in the same form that in the regular case. That is, generate a complete graph with the same vertices, and their distances as edges, and calculate a minimum spanning tree on that graph.

\begin{thm}
Let $A, B$ be finitely generated groups, and $\Omega$ be a $B$-set with a finite number of orbits $1,\dots, \omega$. Consider the wreath product $A\wr_{\Omega}B$, and an element $x=\left(\prod_{i=1}^sb_ia_{j_i}b_i^{-1}\right) b_f$ of the product.

Then

\begin{equation*}
 E_{A\wr_{\Omega}B}(x)=\sum_{i=i}^s\lVert a_i\rVert_A + \sum_{j=i}^{\omega}\mu_j(e_j, e_jb_{i_j}, e_jb_f) + \lVert b_f\rVert_B
\end{equation*}

is an estimate of the metric.

\end{thm}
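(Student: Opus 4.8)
The plan is to prove the two inequalities $\lVert x\rVert_{A\wr_\Omega B}\le C\,E_{A\wr_\Omega B}(x)$ and $E_{A\wr_\Omega B}(x)\le C\,\lVert x\rVert_{A\wr_\Omega B}$ separately, each with additive constant $0$, treating $\omega$ as a genuine constant since by hypothesis the number of orbits is finite. Throughout I work in the lamplighter picture: a word $w$ in $S_A\cup S_B$ determines the sequence of its $B$-prefixes $1=g_0,g_1,\dots,g_L=b_f$, which projects in each orbit $j$ to a walk $e_j=e_jg_0,e_jg_1,\dots$ in the graph $(\Omega_j,E_j)$ described before the statement (the Schreier graph of $\mathrm{Stab}(e_j)$, which is connected, so all the distances used to build the $\mu_j$ are finite); reading a generator of $A_j$ while the $j$-th pointer sits at a vertex $v$ multiplies the coordinate at $v$ by that generator and changes no other coordinate and no other orbit.

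For the lower bound I would start from a geodesic word $w$ for $x$ of length $n=\lVert x\rVert_{A\wr_\Omega B}$. Deleting all $A$-generators leaves a word in $S_B$ representing $b_f$, so $\lVert b_f\rVert_B\le n$. For each $i$, the $A_{j_i}$-generators read while the $j_i$-th pointer is at $e_{j_i}b_i$ multiply to $a_i$ (we may assume the normal form uses distinct orbit--coordinate pairs), so $\lVert a_i\rVert_A$ is at most the number of such generators, whence $\sum_i\lVert a_i\rVert_A\le n$. For each orbit $j$, the projected walk in $(\Omega_j,E_j)$ has length at most $n$, begins at $e_j$, ends at $e_jb_f$, and visits every $e_jb_i$ with $j_i=j$ (the pointer must be there to write a nontrivial $a_i$); the first inequality established in the proof of the minimum spanning tree lemma, applied in $(\Omega_j,E_j)$, then gives $\mu_j(e_j,e_jb_{i_j},e_jb_f)\le n$. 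Summing over the $\omega$ orbits yields $E_{A\wr_\Omega B}(x)\le(\omega+2)n$.

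For the upper bound I would build an explicit word for $x$, handling the orbits one at a time. Fix for each $j$ a minimum spanning tree of the complete graph on $\{e_j\}\cup\{e_jb_i:j_i=j\}\cup\{e_jb_f\}$ with Schreier-distance weights; replace each of its edges by a geodesic of $(\Omega_j,E_j)$, obtaining a connected subgraph with at most $\mu_j$ edges, traverse it by a closed walk based at $e_j$ using each edge at most twice (length $\le 2\mu_j$), and lift this walk edge-by-edge to a word $w_j$ in $S_B$ of length $\le 2\mu_j$ whose value $c_j$ lies in $\mathrm{Stab}(e_j)$. While reading $w_j$, the first time a prefix reaches a vertex $e_jb_i$ with $j_i=j$ I splice in a geodesic $A_j$-word for $a_i$; call the result $W_j$. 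Then $W_jw_j^{-1}$ sets exactly the orbit-$j$ coordinates of $x$, leaves every other orbit untouched (no $A_{j'}$-letter with $j'\neq j$ was read, and pure $B$-moves change no coordinate) and returns the $B$-component to $1_B$. Concatenating $W_1w_1^{-1}\cdots W_\omega w_\omega^{-1}$ and appending a geodesic $S_B$-word for $b_f$ produces a word representing $x$ of length at most $4\sum_j\mu_j+\sum_i\lVert a_i\rVert_A+\lVert b_f\rVert_B\le 4\,E_{A\wr_\Omega B}(x)$.

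The main obstacle, and the reason this is not a verbatim transcription of the regular case, is that a single word in $B$ must serve all orbits at once, so one cannot reduce to a single Steiner/travelling-salesman instance; the trick that decouples the orbits is to \emph{do} orbit $j$ and then \emph{undo} its $B$-part, which is precisely what costs the extra constant factor and what forces the separate summand $\lVert b_f\rVert_B$: inside $\mu_j$ the quantity $d_{\Omega_j}(e_j,e_jb_f)$ can be far smaller than $\lVert b_f\rVert_B$ because point stabilizers create shortcuts in the Schreier graphs, so $\sum_j\mu_j$ by itself does not bound $\lVert b_f\rVert_B$ from above. A minor but necessary check is that splicing $A_j$-letters into $w_j$ moves no pointer (those letters sit over $1_B$ in $B$) and that distinct support points lying in one orbit are recorded independently, so the blocks $W_jw_j^{-1}$ really compose to the claimed element.
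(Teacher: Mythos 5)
Your proposal is correct, and for the direction $\lVert x\rVert \leq C\,E(x)$ it follows the paper's strategy (realize each $\mu_j$ by an explicit word, splicing in geodesic $A_j$-words at the support points, then append a word for $b_f$), but you execute it more carefully than the paper does: the paper simply says to ``follow the $\mu_j$ in any order'' without addressing the fact that the $B$-moves made while servicing orbit $j$ displace the pointers of every other orbit. Your do-then-undo blocks $W_jw_j^{-1}$, each returning the $B$-component to the identity, are exactly the missing decoupling step, at the cost of an explicit constant factor $4$; your remark explaining why $\lVert b_f\rVert_B$ must appear as a separate summand (stabilizers can make $d_{\Omega_j}(e_j,e_jb_f)$ much smaller than $\lVert b_f\rVert_B$) is also a point the paper leaves implicit. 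For the reverse direction the two arguments genuinely differ: the paper bounds the increment of $E$ under right multiplication by a single generator (at most $\omega+1$) and iterates along a geodesic, whereas you take the whole geodesic at once, project it to each Schreier graph, and invoke the first inequality of the spanning-tree lemma to get $\mu_j\leq n$ term by term, arriving at $E(x)\leq(\omega+2)n$. The two are equivalent in substance --- your global projection is essentially the integrated form of the paper's Lipschitz estimate --- but yours has the advantage of reusing the already-proved lemma rather than re-examining how each $\mu_j$ responds to a one-step change of $b_f$. No gaps; the only hypotheses you quietly use (connectedness of each Schreier component, distinctness of the orbit--coordinate pairs in the normal form) are both legitimate and explicitly flagged.
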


\begin{proof}
 For one bound, check that $E_{A\wr_{\Omega}B}(x)$ is greater than the actual metric by constructing the following word: for each orbit, follow the $\mu_j$ in any order, and when arriving to an element in the support add an optimal word in $A$ for the corresponding element. Finally, add an optimal word for $b_f\in B$ The result is a word for $x$ with length $E_{A\wr_{\Omega}B}(x)$, so the actual length must be smaller.

 For the opposite bound, we will check that multiplying a given element $x$ by a generator $a_j$ or $b$ will not make $E_{A\wr_{\Omega}B}(x)$ grow more than by $\omega+1$, which is a constant depending only on the group. With that, writing the optimal word for $x$ can not make $E(x)$ greater than $(\omega+1)\lVert x \rVert$.

 Given an element $x$, multiplying it by $a_j$ will increase or decrease by one the $\lVert a_i\rVert$ corresponding to $b_f$. The sum of $\mu$, however, will not be altered, because $b_f$, the current position, was already counted in all the trees, and adding or removing it from the support will not change this.

 If the element is a generator $b\in B$, each $\mu$ can change at most by 1, since $b_f$ is changing by one generator, and the same happens to $\lVert b_f \rVert_B$. Since there are $\omega$ instances of $\mu$, the total value can only change by at most $\omega +1$, which is a fixed constant given the group.

\end{proof}

Also, using this metric estimate, the results from Section \ref{sectiondistortion} can be extended to the nonregular wreath product.

\bibliography{ericsrefs}
\bibliographystyle{plain}

\end{document}